\newtheorem{theorem}{Theorem}
\newtheorem{theorem*}{Theorem}
\newtheorem{observation}[theorem]{Observation}
\newtheorem{proposition}[theorem]{Proposition}
\newtheorem{definition}{Definition}
\newtheorem{fact}[theorem]{Fact}
\theoremstyle{definition}
\newtheorem{remark}[theorem]{Remark}
\newcommand{\GL}{\mathrm{GL}}
\newcommand{\F}{\mathbb{F}}
\newcommand{\Z}{\mathbb{Z}}
\newcommand{\ZC}{\mathrm{Z}}
\newcommand{\Q}{\mathbb{Q}}
\newcommand{\R}{\mathbb{R}}
\newcommand{\C}{\mathbb{C}}
\newcommand{\N}{\mathbb{N}}
\newcommand{\M}{\mathrm{M}}
\newcommand{\cA}{\mathcal{A}}
\newcommand{\cB}{\mathcal{B}}
\newcommand{\cC}{\mathcal{C}}
\newcommand{\bA}{\mathbf{A}}
\newcommand{\bB}{\mathbf{B}}
\newcommand{\fB}{\mathfrak{B}}
\title{Group-theoretic generalisations of vertex and edge connectivities
}
\author{Yinan Li \thanks{Centrum Wiskunde \& Informatica and QuSoft, Science Park 123, 1098XG Amsterdam, Netherlands ({\tt Yinan.Li@cwi.nl}). Partially supported by ERC Consolidator Grant 615307-QPROGRESS.}
\and 
Youming Qiao 
\thanks{Center for Quantum Software and Information, University of Technology Sydney, Ultimo NSW 2007, Australia. {\tt Youming.Qiao@uts.edu.au}. Partially supported by the Australian Research Council DECRA DE150100720. }
}
\begin{document}

\maketitle
\begin{abstract}
Let $p$ be an odd prime. Let $P$ be a finite $p$-group of class $2$ and exponent 
$p$, whose commutator quotient $P/[P,P]$ is of order $p^n$. We define two 
parameters for $P$ related to central decompositions. The first parameter, 
$\kappa(P)$, is the smallest integer $s$ for the existence of a subgroup $S$ of 
$P$ satisfying (1) $S\cap [P,P]=[S,S]$, (2) $|S/[S,S]|=p^{n-s}$, and (3) $S$ 
admits a non-trivial central decomposition. The second parameter, $\lambda(P)$, is 
the smallest integer $s$ for the existence of a central subgroup $N$ of order 
$p^s$, such that $P/N$ admits a non-trivial central decomposition.

While defined in purely group-theoretic terms, these two parameters generalise 
respectively the vertex and edge connectivities of graphs: For a simple undirected 
graph $G$, through the classical procedures of Baer (Trans. Am. Math. Soc., 1938), 
Tutte (J. Lond. Math. Soc., 1947) and Lov\'asz (B. Braz. Math. Soc., 1989), there 
is a $p$-group $P_G$ of class $2$ and exponent $p$ that is naturally associated 
with $G$. Our main results show that the vertex connectivity $\kappa(G)$ is equal 
to $\kappa(P_G)$, and the edge connectivity $\lambda(G)$ is equal to 
$\lambda(P_G)$. We also discuss the relation between $\kappa(P)$ and $\lambda(P)$ 
for a general $p$-group $P$ of class $2$ and exponent $p$, as well as the 
computational aspects of these parameters. 

\vskip 0.5em

\noindent\emph{Keywords}: $p$-groups of class $2$, graph connectivity, matrix 
spaces, 
bilinear maps

\vskip 0.5em

\noindent\emph{2010 MSC}: 20D15, 05C40, 15A69
\end{abstract}

\section{Introduction}

The main purpose of this note is to define and explore two natural group-theoretic 
parameters, 
which are closely related to vertex and edge connectivities in graphs.

In this 
introduction, we first introduce the
classical procedures of Baer~\cite{Bae38}, Tutte~\cite{Tut47}, and Lov\'asz~\cite{Lov89} 
which relate graphs with $p$-groups of class $2$ and exponent $p$.
We then define two group-theoretic parameters. Our
main 
result shows that the vertex and edge connectivities of a graph are equal to 
the 
two 
parameters we defined on the corresponding group respectively. We then compare 
the two parameters and discuss on their computational aspects. 

Since the main goal of this note is to set up a link between graph theory and 
group theory, we shall include certain background information, despite that it is 
well-known to researchers in the respective areas. 

\subsection{From graphs to groups: the Baer-Lov\'asz-Tutte 
procedure}\label{subsec:blt}

The route from graphs to groups, following 
Baer~\cite{Bae38}, Tutte~\cite{Tut47}, and Lov\'asz~\cite{Lov89}, goes via linear 
spaces of alternating matrices and alternating bilinear maps.

We set up some notation. For $n\in \N$, let $[n]:=\{1, \dots, n\}$. 
Let
$\binom{[n]}{2}$ be the set of size-$2$ subsets of $[n]$.
We use $\F$ to denote a field, and $\F_q$ 
to denote the finite field with $q$ elements. Vectors in $\F^n$ 
are column vectors, and $\langle\cdot\rangle$ denotes the linear 
span 
over underlying field $\F$.
Let $\Lambda(n, \F)$ be 
the linear space of $n\times n$ 
alternating matrices over $\F$. 
Recall that an $n\times n$ matrix $A$ over $\F$ is 
\emph{alternating} if for any $v\in \F^n$, $v^tAv=0$. That is, $A$ represents an 
alternating bilinear form. Subspaces $\cA$ of 
$\Lambda(n, \F)$, denoted by $\cA\leq\Lambda(n,\F)$,
are called alternating matrix spaces. 
Fix a field $\F$. For $\{i,j\}\in\binom{[n]}{2}$ with $i<j$,
the \emph{elementary} 
alternating matrix $A_{i,j}$ over $\F$ is the matrix with the $(i,j)$th entry 
being $1$, the 
$(j, i)$th entry being $-1$, and the rest entries being $0$. 

In this note, we only consider non-empty, simple, and undirected graphs with the 
vertex 
set being 
$[n]$. That is, a graph is $G=([n], E)$ where $E\subseteq 
\binom{[n]}{2}$. Let $|E|=m$. Note that the non-empty condition implies that 
$n\geq 2$ and $m\geq 1$. 

Let $p$ be an odd prime. We use $\fB_{p,2}$ to denote the class of 
\emph{non-abelian} $p$-groups of 
class $2$ and exponent $p$. That is, a non-abelian group $P$ is in $\fB_{p,2}$, if 
for 
any $g\in P$, $g^p=1$, and the commutator subgroup $[P,P]$ is contained in the 
centre $\ZC(P)$.
For $n, m\in \N$, we further define $\fB_{p,2}(n, m)\subseteq\fB_{p,2}$, which consists of those $P\in 
\fB_{p,2}$ with $|P/[P,P]|=p^n$ and 
$|[P,P]|=p^m$. Note that the non-abelian condition implies that $n\geq 2$ and 
$m\geq 1$ are required for $\fB_{p,2}(n, m)$ to be non-empty.

We then explain the procedure from graphs to groups in $\fB_{p,2}$ 
following Baer, Tutte and Lov\'asz.
\begin{enumerate}
\item\label{item: BLT step 1} Let $G=([n], E)$ be a simple and undirected graph with $m$ edges.
Following Tutte \cite{Tut47} and Lov\'asz \cite{Lov89}, we construct from $G$ an 
$m$-dimensional alternating matrix space in $\Lambda(n, \F)$ by setting 
\begin{equation}\label{eq:gf_to_sp}
\cA_G=\langle 
A_{i,j} : \{i,j\}\in E\rangle.
\end{equation}

\item\label{item: BLT step 2} Given an $m$-dimensional $\cA\leq\Lambda(n, \F)$,
let 
$\bA=(A_1, \dots, A_m)\in \Lambda(n, \F)^m$ be an ordered basis of $\cA$. The 
alternating bilinear map defined by $\bA$, $\phi_\bA:\F^n\times\F^n\to\F^m$, is 
\begin{equation}\label{eq:bil_def}
\phi_\bA(v, u)=(v^tA_1u, \dots, v^tA_mu)^t.
\end{equation}
Since $\cA$ is of dimension $m$, we 
have that $\phi_\bA(\F^n, \F^n)=\F^m$.

\item\label{item: BLT step 3} Let $p$ be an odd prime. 
Let 
$\phi:\F_p^n\times \F_p^n\to\F_p^m$ be an alternating bilinear map, such that 
$\phi(\F_p^n, \F_p^n)=\F_p^m$. 
Following Baer 
\cite{Bae38}, we 
define a $p$-group, $P_\phi\in \fB_{p,2}(n, m)$, as follows. The group 
elements are from $\F_p^n\oplus \F_p^m$. For $(v_i, u_i)\in \F_p^n\oplus \F_p^m$, 
$i=1, 2$, the group product $\circ$ is defined 
as
\begin{equation}\label{eq:gp_def}
(v_1, u_1)\circ (v_2, u_2) := (v_1+v_2, u_1+u_2+\frac{1}{2}\cdot \phi(v_1, v_2)).
\end{equation}
It can be verified that $P_\phi\in \fB_{p,2}(n, m)$, because 
of the condition that $\phi(\F_p^n, \F_p^n)=\F_p^m$. 
\end{enumerate}

Starting from a graph $G$, we follow the above three steps to obtain a 
$p$-group of class $2$ and exponent $p$, denoted by $P_G$. 
It can be verified 
easily that this process preserves isomorphism types, despite that the procedure 
from alternating matrix 
sapces to alternating bilinear maps depends on choices of ordered bases; see 
Remark~\ref{rem:trans}. That is, 
if the graphs $G_1$ 
and $G_2$ are isomorphic, then the corresponding $p$-groups $P_{G_1}$ and 
$P_{G_2}$ are isomorphic as well. 

\begin{definition}[The Baer-Lov\'asz-Tutte procedure]\label{def:blt}
Let $G=([n], E)$ be an undirected simple graph with $|E|=m>0$.
The 
Baer-Lov\'asz-Tutte procedure, as specified in the above three steps, takes $G$ 
and a prime $p>2$, 
and produces a $p$-group 
of class $2$ and exponent $p$, $P_G\in \fB_{p,2}(n, m)$.
\end{definition}

\subsection{Our results}\label{subsec:results}

\paragraph{Two group-theoretic parameters.} Let $H$ be a finite group. We use 
$J\leq H$ 
 to denote that $J$ is a 
 subgroup of $H$, and $J<H$ to denote that $J$ is a proper subgroup of $H$. 
For $S, T\subseteq H$, 
$ST=\{st:s\in S, t\in T\}$. If two subgroups $J, K\leq H$ 
satisfy that $JK=KJ$, then $JK$ is a subgroup of $H$. 

Recall that $H$ is a \emph{central product} of two subgroups $J$ and $K$, if (1) every 
element of $J$ commutes with every element of $K$, i.e. $[J,K]=1$, and (2)  
$H$ is generated by $J$ 
and 
$K$, i.e. $H=JK$. See e.g. 
\cite[pp. 137]{suzuki}.
In the following, 
we always assume that a central product 
is non-trivial, i.e., $J$ and $K$ are non-trivial proper subgroups of $H$. If such 
$J$ and $K$ exist, then we say that $H$ admits a central decomposition.

Given $P\in 
\fB_{p,2}$, a subgroup $S\leq P$ is \emph{regular} with respect to commutation, or 
simply regular for short, if 
$[S,S]=S\cap [P,P]$. 

\begin{definition}[$\kappa$ and $\lambda$ for $p$-groups of class $2$ and exponent 
$p$]
Let $P\in \fB_{p,2}(n, m)$. 

The \emph{regular-subgroup central-decomposition number} of $P$, denoted by 
$\kappa(P)$, 
is 
the 
smallest 
$s\in \N$ for the existence of a regular subgroup $S$ 
 with $|S/[S,S]|=p^{n-s}$, such 
that $S$ 
admits 
a central decomposition.

The \emph{central-quotient central-decomposition number} of $P$, denoted as 
$\lambda(P)$, is the smallest $s\in \N$ for the existence of a central subgroup 
$N$ of order $p^s$, such that $P/N$ admits a central decomposition.
\end{definition}
An explanation for imposing the regularity 
condition in the definition of $\kappa(P)$ can be found in 
Remark~\ref{rem:regularity}.
In the definition of $\lambda(P)$, we can actually restrict
$N$ to be from those central subgroups contained in 
$[P,P]$ (cf. Observation~\ref{obs:def} (2)). 

\paragraph{The results.} Recall that for a graph $G$, the \emph{vertex connectivity} 
$\kappa(G)$ denotes the smallest number of vertices needed to remove to disconnect 
$G$, and the \emph{edge connectivity} $\lambda(G)$ denotes the smallest number of edges 
needed to remove to disconnect $G$ \cite{Die06}. 

Given the above preparation, we can state our main result. 
\begin{theorem}\label{thm:main}
For an $n$-vertex and $m$-edge graph $G$, let $P_G\in \fB_{p,2}(n, m)$ be 
the result of applying the Baer-Lov\'asz-Tutte procedure to $G$ and a prime $p>2$. 
Then $\kappa(G)=\kappa(P_G)$, and 
$\lambda(G)=\lambda(P_G)$. 
\end{theorem}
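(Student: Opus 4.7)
The plan is to route the argument through the alternating matrix space $\cA_G \le \Lambda(n, \F_p)$ and its associated bilinear map $\phi = \phi_\bA$.

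\textbf{Dictionary.} For any subspace $U \le \F_p^n$, the set $S_U = \{(u, w) : u \in U,\ w \in \phi(U,U)\}$ is a regular subgroup of $P_G$ with $[S_U, S_U] = \phi(U,U)$ and $|S_U/[S_U, S_U]| = p^{\dim U}$. Using the commutator formula $[(v_1, w_1), (v_2, w_2)] = (0, \phi(v_1, v_2))$, a direct check shows that $S_U$ (and in fact any regular subgroup with $\pi_1$-image $U$) admits a central decomposition iff there exist $U_1, U_2 \subsetneq U$ with $U_1 + U_2 = U$ and $\phi(U_1, U_2) = 0$. The implication ``$J, K \subsetneq S$ proper force $U_i \subsetneq U$ proper'' uses regularity: any $J \le S$ with $\pi_1(J) = U$ contains $[J,J] = \phi(U,U) = [S,S] = S \cap [P_G, P_G]$, hence $J = S$. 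Analogously (using the remark after the definition to restrict $N$ to $[P_G, P_G]$), $P_G/N$ admits a central decomposition iff there exist $U_1, U_2 \subsetneq \F_p^n$ with $U_1 + U_2 = \F_p^n$ and $\phi(U_1, U_2) \le W$, where $W \le \F_p^m$ is the subspace corresponding to $N$. Thus $\kappa(P_G)$ is the minimum codimension of a subspace $U$ admitting such a decomposition, and $\lambda(P_G) = \min \dim \phi(U_1, U_2)$ over non-trivial decompositions $\F_p^n = U_1 + U_2$.

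\textbf{Easy direction.} A minimum vertex cut $R$ of $G$ splits $[n] \setminus R$ into non-empty parts $V_1 \sqcup V_2$ with no crossing edges; setting $U_i = \langle e_a : a \in V_i\rangle$ and $U = U_1 + U_2$ gives a codim-$|R|$ subspace realising the required decomposition, so $\kappa(P_G) \le \kappa(G)$. Similarly, a minimum edge cut with sides $V_1, V_2$ and $U_i = \langle e_a : a \in V_i\rangle$ yields $\phi(U_1, U_2) = \langle A_{i,j} : \{i,j\} \in E(V_1, V_2)\rangle$ of dimension $|E(V_1, V_2)| = \lambda(G)$, so $\lambda(P_G) \le \lambda(G)$.

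\textbf{Hard direction.} The reverse inequalities are the crux: one must convert an arbitrary subspace witness into a coordinate witness without loss. For $\lambda$, dualising, if $\dim \phi(U_1, U_2) = s$ then $\phi(U_1, U_2)^\perp$ corresponds via the basis $\{A_e\}_{e \in E}$ to an $(m-s)$-dimensional subspace $\cA' \le \cA_G$ whose elements $M = \sum_e c_e A_e$ all vanish on $U_1 \times U_2$. Since $U_1 + U_2 = \F_p^n$, each such $M$ satisfies $U_1 \cap U_2 \subseteq \ker M$ and $\im M \subseteq (U_1 \cap U_2)^\perp$, so $M$ factors through $\F_p^n/(U_1 \cap U_2)$. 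My plan is to exploit this rigidity, together with the sparse elementary basis $\{A_e\}$, to extract a vertex partition $V_1 \sqcup V_2 = [n]$ whose crossing edges span a subspace of dimension at most $s$---starting, say, from the natural candidate $V_1 = \{a : e_a \in U_1\}$, $V_2 = [n] \setminus V_1$, and adjusting. The argument for $\kappa$ is parallel, carried out inside the fixed codim-$s$ subspace $U$.

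\textbf{Main obstacle.} The rounding step is the principal difficulty: in a general alternating matrix space the free minimum can lie strictly below the coordinate minimum, so one must genuinely exploit the special structure of $\cA_G$. Two natural lines of attack are (i) induction on $\dim(U_1 \cap U_2)$, reducing to the non-degenerate case $U_1 \oplus U_2 = \F_p^n$, where a basis adapted to the splitting should directly produce a partition with a controlled number of crossing edges; and (ii) a Menger/max-flow argument that views $\cA'$ as encoding a linear-algebraic flow between $U_1$ and $U_2$ in $\cA_G$, which decomposes into edge-disjoint flows in $G$ and recovers the classical edge-connectivity characterisation.
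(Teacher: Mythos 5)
Your dictionary between central decompositions of regular subgroups (resp.\ central quotients) of $P_G$ and orthogonal decompositions of restrictions (resp.\ quotients) of $\phi$ is correct and matches the paper's Propositions~\ref{prop:space_map} and~\ref{prop:map_group}, and your ``easy direction'' ($\kappa(P_G)\leq\kappa(G)$, $\lambda(P_G)\leq\lambda(G)$) is fine. But the proof is incomplete where it matters: the ``hard direction'' is exactly the content of the theorem, and you have only stated it as an obstacle with two unexecuted ``lines of attack.'' Converting an arbitrary subspace witness into a coordinate witness is not a routine rounding step, and neither of your proposed strategies is carried far enough to judge whether it would close the gap. In particular, your reduction for $\lambda$ to the case $U_1\oplus U_2=\F_p^n$ by ``induction on $\dim(U_1\cap U_2)$'' is not needed in the paper's formulation (orthogonal decompositions are already direct sums there), and the real difficulty survives the reduction: even for a genuine direct sum $\F_p^n=U\oplus V$, the rows of the basis matrix $T=[T_1\ T_2]$ need not be supported on only one of the two blocks, so the ``natural candidate'' partition $V_1=\{a: e_a\in U_1\}$ can be badly behaved.

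For comparison, here is how the paper resolves it. For $\kappa$: from the $n\times d$ matrix $T$ whose columns span $W=U\oplus V$, pick a full-rank $d\times d$ submatrix $R$ with row set $I$, and use the Laplace expansion to split $I=I_1\uplus I_2$ so that the corresponding $b\times b$ and $c\times c$ blocks of $R$ are invertible; an edge $\{i,j\}$ crossing $I_1,I_2$ forces $u_iv_j^t=u_jv_i^t$ with $u_i,v_j\neq 0$, hence rows $i$ and $j$ of $T$ are proportional, contradicting $\rk R=d$. For $\lambda$: one first recasts $\lambda(\cA)$ as the minimum of $\dim\{T_1^tAT_2: A\in\cA\}$ over direct sum decompositions, then splits into two cases according to whether some row index $i$ has both $u_i\neq 0$ and $v_i\neq 0$. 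In the clean case one exhibits $d+1$ linearly independent rank-one matrices $u_{i_k}v_{j_k}^t$ from $d+1$ crossing edges; in the overlap case one uses $\lambda(G)\leq\delta(G)$ to get $d+1$ neighbours of $i$ and shows that a linear dependence among the matrices $u_iv_{j_k}^t-u_{j_k}v_i^t$ forces a linear dependence among rows of the invertible matrix $T$. These two case analyses are the crux of the theorem, and your proposal does not contain them. Until you supply an actual argument for the hard direction, the proof cannot be accepted.
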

Recall that $\kappa(P_G)$ and $\lambda(P_G)$ are defined in purely group-theoretic 
terms, while $\kappa(G)$ and $\lambda(G)$ are classical notions in graph theory. 
Therefore, Theorem~\ref{thm:main} sets up a surprising link 
between group theory and graph theory. 

To understand these two parameters and their relation better, we consider the 
following question. Recall that for a graph $G$, it is well-known that 
$\kappa(G)\leq \lambda(G)\leq \delta(G)$, 
where $\delta(G)$ denotes the minimum degree of vertices in $G$ (cf. e.g. 
\cite[Proposition 1.4.2]{Die06}). We study a question of the same type in 
the context of $p$-groups of class $2$ and exponent $p$. For this we need the 
following definition.
\begin{definition}[Degrees and $\delta$ for $p$-groups of 
class $2$ and exponent $p$]\label{def:gp_degree}
For 
$P\in\fB_{p,2}(n, m)$ and $g\in P$, suppose $C_P(g)=\{h\in P : 
[h, g]=1\}$
is of order $p^d$. Then the \emph{degree} of $g$ is 
$\deg(g)=n+m-d$. 
The \emph{minimum degree} of $P$, $\delta(P)$, is the minimum degree over $g\in 
P\setminus [P,P]$.
\end{definition}
It is easy to see that for any $g\in P$, $\deg(g)\leq n-1$ 
(cf. Observation~\ref{obs:def} (3)). Therefore 
$\delta(P)\leq n-1$. We 
then have the following. 

\begin{proposition}\label{prop:relation}
\begin{enumerate}
\item For any $P\in \fB_{p,2}$, $\kappa(P)\leq 
\delta(P)$, and $\lambda(P)\leq \delta(P)$.
\item There exists $P\in\fB_{p,2}$, such that 
$\kappa(P)>\lambda(P)$. 
\end{enumerate}
\end{proposition}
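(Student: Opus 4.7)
Plan. Throughout I work in the bilinear-form picture of Step~\ref{item: BLT step 3}: write $P=P_\phi$ for a surjective alternating $\phi\colon\F_p^n\times\F_p^n\to\F_p^m$, and identify $P/[P,P]$ with $\F_p^n$ (via the natural projection $\pi$) and $[P,P]$ with $\{0\}\oplus\F_p^m$. For $v\in\F_p^n$ set $V_v:=\{w\in\F_p^n:\phi(v,w)=0\}$; the centralizer of $(v,u)$ projects to $V_v$, so Definition~\ref{def:gp_degree} translates to $\deg((v,u))=n-\dim V_v$. Fix $v^*\in\F_p^n\setminus\{0\}$ realizing $\dim V_{v^*}=n-\delta(P)$, and note $v^*\in V_{v^*}$ because $\phi$ is alternating. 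Both bounds in Part~1 flow from this vector.

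For $\lambda(P)\le\delta(P)$, set $N:=\phi(v^*,\F_p^n)\le[P,P]$, a central subspace of dimension $\delta(P)$ by rank-nullity. Pick a hyperplane $H\le\F_p^n$ with $v^*\notin H$, let $g:=(v^*,0)$, and consider in $P/N$ the subgroups $A:=\langle gN\rangle$ and $B:=(H\oplus\phi(H,H))N/N$. Because $\phi(v^*,H)\subseteq N$, we have $[A,B]=1$; because $[P,P]=\phi(v^*,H)+\phi(H,H)\subseteq N+\phi(H,H)$, we have $AB=P/N$; and both factors are proper and nontrivial (using $n\ge 2$). Thus $P/N$ admits a central decomposition.

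For $\kappa(P)\le\delta(P)$, let $V:=V_{v^*}$ and consider the regular subgroup $S:=V\oplus\phi(V,V)\le P$, which satisfies $|S/[S,S]|=p^{\dim V}=p^{n-\delta(P)}$. Assuming $\dim V\ge 2$ (equivalently $\delta(P)\le n-2$), pick a hyperplane $H$ of $V$ not containing $v^*$, and set $A:=\langle(v^*,0)\rangle$ and $B:=H\oplus\phi(H,H)$. Since $H\le V_{v^*}$, we have $\phi(v^*,H)=0$, so $[A,B]=1$; the projection $\pi(AB)=\langle v^*\rangle+H=V$; and because $v^*\in V$, the commutator image of $AB$ equals $\phi(H,H)=\phi(V,V)$, giving $AB=S$. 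Both factors are proper and nontrivial, establishing the bound. The corner case $\dim V=1$ (equivalently $\delta(P)=n-1$) needs a separate treatment since no usable hyperplane of $V$ exists.

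For Part~2, Theorem~\ref{thm:main} combined with the classical $\kappa(G)\le\lambda(G)$ forces $\kappa(P_G)\le\lambda(P_G)$ for every graph-derived $P_G$, so any witness of $\kappa>\lambda$ must come from an alternating matrix space $\cA\le\Lambda(n,\F_p)$ not of the form $\cA_G$ in~\eqref{eq:gf_to_sp}. The plan is to exhibit a small explicit such $\cA$ and verify that (a)~some one-dimensional central quotient of $P_\cA$ admits a central decomposition, giving an upper bound on $\lambda$, while (b)~no regular subgroup $S\le P_\cA$ with $|S/[S,S]|$ above a certain threshold does, giving a lower bound on $\kappa$. The main obstacle is~(b): one must rule out every orthogonal splitting $U=U_A+U_B$ with $\phi(U_A,U_B)=0$ of every sufficiently large $U\le\F_p^n$, which is accomplished by choosing $\cA$ so that on each large restriction the induced matrix space retains enough non-degeneracy to obstruct such a splitting.
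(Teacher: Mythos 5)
Your Part 1 is essentially correct and follows the same idea as the paper, just phrased directly in the group/bilinear-map language instead of passing to alternating matrix spaces: pick $v^*$ of minimum degree, cut the commutators $\phi(v^*,\cdot)$ to bound $\lambda$, and restrict to the "radical" $V_{v^*}$ to bound $\kappa$. Both central decompositions you exhibit check out. The corner case $\delta(P)=n-1$ that you flag is not a real obstruction: the paper disposes of it by working with $\kappa(\cA)$ and invoking the convention that the zero alternating matrix space admits an orthogonal decomposition, so that $\kappa(\cA)\le n-1$ holds unconditionally; you should either adopt the analogous convention on the group side or simply note that $\delta(P)=n-1$ makes the claimed inequality the trivial bound.

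Part 2, however, is a genuine gap: you correctly observe that a counterexample cannot be of the form $\cA_G$, but you never produce one, and the entire content of the statement is the construction. The paper's route is worth internalising because it cleanly solves exactly the obstacle you name (ruling out orthogonal splittings of \emph{every} large restriction). Call $\cA\le\Lambda(n,\F)$ \emph{fully connected} if for all linearly independent $u,v$ some $A\in\cA$ has $u^tAv\neq 0$; this forces $\kappa(\cA)=n-1$ in one line, since no restriction of dimension $\ge 2$ can split orthogonally. One then builds such an $\cA$ with small $\lambda$ as follows: take a fully connected (non-alternating) $\cB\le\M(s\times t,\F)$ of dimension $d<n-1$ with $n=s+t$ --- these exist over $\F_q$ and $\Q$ via the regular representation of a degree-$s$ field extension --- place the matrices $\begin{bmatrix}0 & B_i\\ -B_i^t & 0\end{bmatrix}$ in $\cA$ together with all elementary alternating matrices supported on the two diagonal blocks. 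A short case analysis shows $\cA$ is fully connected, while discarding the $d$ off-diagonal generators leaves a block-diagonal subspace admitting the obvious orthogonal decomposition, so $\lambda(\cA)\le d<n-1=\kappa(\cA)$. Without some concrete construction of this kind (and a verified lower bound on $\kappa$), your Part 2 remains a statement of intent rather than a proof.
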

That is, while we can still upper bound $\kappa(P)$ and $\delta(P)$ using a 
certain 
minimum degree notion, the inequality 
$\kappa(\cdot)\leq\lambda(\cdot)$ does not hold in general in the $p$-group 
setting. 

\subsection{Related works and open ends}\label{subsec:discussion}

\paragraph{Related works.} Alternating matrix 
spaces and alternating bilinear maps serve as the intermediate objects  between 
graphs and groups in the Baer-Lov\'asz-Tutte procedure. 
We elaborate more on the previous works 
that demonstrate their links to the 
two sides.


The link between graphs and alternating matrix spaces  dates back to the works of 
Tutte and Lov\'asz \cite{Tut47,Lov89} in the context of perfect matchings. 
Let $G=([n], E)$ be a graph, and let $\cA_G\leq \Lambda(n, \F)$ be the 
alternating matrix space 
associated with $G$ as in Step~\ref{item: BLT step 1}.
Tutte and Lov\'asz realised 
that the matching number of $G$, 
$\mu(G)$, is equal 
to the maximum rank over matrices in $\cA_G$.\footnote{This is straightforward to 
see if the underlying field $\F$ is large enough. If $\F$ is small, it follows 
e.g. as a consequence of the linear matroid parity theorem; cf. the discussion 
after \cite[Theorem 4]{Lov89}.}
More specifically, Tutte 
represented $G$ as a symbolic matrix, that is a matrix whose entries are either 
variables or $0$ \cite{Tut47}. It can be interpreted as a linear space of matrices 
in a straightforward fashion. Lov\'asz then more 
systematically 
studied this construction from the latter perspective \cite{Lov89}.

Recently in \cite{BCG+19}, the 
second author and collaborators showed 
that the independence number of $G$, 
$\alpha(G)$, is 
equal to the 
maximum 
dimension over the isotropic spaces\footnote{A subspace $U\leq 
\F^n$ is an 
isotropic space of $\cA\leq \Lambda(n, \F)$, if for any $u, u'\in U$, and any 
$A\in \cA$, $u^tAu'=0$.} of $\cA_G$. They also showed 
that the 
chromatic number of $G$, $\chi(G)$, is equal to the minimum $c$ such that there 
exists a direct sum decomposition of $\F^n$ into $c$ non-trivial isotropic spaces 
for $\cA_G$. 
As the reader will see below, the proof of Theorem~\ref{thm:main} 
also goes by defining appropriate parameters $\kappa(\cdot)$ and $\lambda(\cdot)$ 
for 
alternating matrix 
spaces, and proving that $\kappa(\cA_G)=\kappa(G)$ and 
$\lambda(\cA_G)=\lambda(G)$. This translates another two graph-theoretic 
parameters to the alternating matrix space setting.

The work most relevant to the current note in this direction is \cite{LiQ2017} by 
the 
present authors. In 
that work, we adapted a combinatorial technique for the graph isomorphism problem 
by Babai, 
Erd\H{o}s, and Selkow \cite{BES80}, to tackle isomorphism testing of groups from 
$\fB_{p,2}$, via alternating matrix spaces. This leads to the definition of a 
``cut'' for alternating matrix spaces, which in turn naturally leads to the edge 
connectivity notion; cf. the proof of 
Proposition~\ref{prop:lambda_G_cA}.

The link between alternating bilinear maps
and $\fB_{p,2}$ dates back to the work
of Baer \cite{Bae38}. That is, from an alternating bilinear map $\phi$, we can 
construct a group $P_\phi$ in $\fB_{p,2}$ as in Step~\ref{item: BLT step 3}. 
On the 
other hand, given  $P\in \fB_{p,2}(n, m)$, by taking
 the commutator bracket we obtain an alternating 
bilinear map $\phi_P$. 
A generalisation of this link to $p$-groups of Frattini 
class $2$ was crucial in Higman's enumeration of $p$-groups
\cite{Hig60}. 
Alperin \cite{Alp65}, Ol'shanskii 
\cite{Ols78} and Buhler, Gupta, and Harris \cite{BGH87} used this link to study 
large 
abelian subgroups of $p$-groups, a question first considered by Burnside 
\cite{Bur13}. This 
is because abelian subgroups of $P$ containing $[P,P]$ correspond to isotropic 
spaces of $\phi_P$. 

The works most relevant to the current note in this direction are 
\cite{Wil09,Wil09b} by James B. Wilson. He studied 
central decompositions of $P$ via the link between alternating 
bilinear maps and $\fB_{p,2}$. In particular, he utilised that 
central decompositions of $P$ correspond to orthogonal decompositions of $\phi_P$.

Finally, we recently learnt of the work \cite{Rossmann19} of Rossmann and Voll, 
who 
study those $p$-groups of class 
$2$ and exponent $p$ obtained from graphs through the Baer-Lov\'asz-Tutte
procedure in the context of zeta functions of groups.

\paragraph{Open ends.} The most interesting questions to us are the computational 
aspects of these parameters. That is, given the linear basis of an alternating 
matrix space $\cA\leq \Lambda(n, \F)$, compute $\kappa(\cA)$ and $\lambda(\cA)$ 
(see Definition~\ref{def:cA}). 
When $\F=\F_q$ with $q$ odd, there is a randomised 
polynomial-time algorithm to 
decide whether $\kappa(\cA)=\lambda(\cA)=0$ by Wilson \cite{Wil09b}. When $\F=\R$ 
or $\C$, by utilising certain machineries from \cite{IQ19}, Wilson's algorithm can 
be adapted to yield a deterministic polynomial-time algorithm to decide 
whether $\kappa(\cA)=\lambda(\cA)=0$. However, to directly use Wilson's algorithm 
to compute 
$\kappa(\cA)$ or $\lambda(\cA)$ seems difficult, as when 
$\kappa(\cA)=\lambda(\cA)=0$, a non-trivial orthogonal decomposition can be nicely 
translated to a certain idempotent in an involutive algebra associated with any 
linear basis of $\cA$; for details, see \cite{Wil09b}.

%

\section{Proofs}\label{sec:proof}

\subsection{Preparations}\label{subsec:prep}
%
%
%

Some notation has been introduced at the beginning 
of sections~\ref{subsec:blt} 
and~\ref{subsec:results}. We add some more here. For a field $\F$ and $d, e\in 
\N$, we use $\M(d\times e, \F)$ to denote the linear space of 
$d\times e$ matrices over $\F$, and $\M(d,\F):=\M(d\times d, \F)$. The $i$th 
standard basis vector of $\F^n$ is 
denoted by ${e_i}$.
\paragraph{Some notions for alternating matrix spaces.} We introduce some basic 
concepts, and then define $\kappa$ and 
$\lambda$, for alternating matrix spaces.

Let $\cA, \cB\leq \Lambda(n, \F)$. We say that $\cA$ and $\cB$ are 
\emph{isometric}, if 
there 
exists $T\in\GL(n, \F)$, such that $\cA=T^t\cB T:=\{T^tBT: B\in \cB\}$. 


For a $d$-dimensional $W\leq \F^n$, let $T$ be an $n\times d$ matrix whose columns 
span $W$. Then the restriction of $\cA$ to $W$ via $T$ is $\cA|_{W, T}:=\{T^tAT : 
A\in \cA\}\leq \Lambda(d, \F)$. For a different $n\times d$ matrix $T'$ whose 
columns also span $W$, $\cA|_{W, T'}$ is isometric to $\cA|_{W, T}$. So we can 
write $\cA|_W$ to indicate a restriction of $\cA$ to $W$ via some such $T$.

Let $\cA\leq \Lambda(n, \F)$ be of dimension $m$. 
An \emph{orthogonal 
decomposition} of $\cA$ is a 
direct sum 
decomposition of $\F^n$ into $U\oplus V$, such that for any $u\in U$, $v\in V$, 
and $A\in \cA$, $u^tAv=0$. An orthogonal decomposition is non-trivial, if neither 
$U$ nor $V$ is the 
trivial space. In the following, we always assume an orthogonal decomposition to 
be non-trivial unless otherwise stated. 

In the degenerate case when $\cA\leq \Lambda(n, \F)$ is the zero space, we define 
it to have an orthogonal decomposition for any $n\in \N$.
When $\cA=\langle A\rangle \leq \Lambda(n, \F)$ is of dimension $1$ and $n>2$, 
$\cA$ always admits an orthogonal decomposition. This can be seen easily from the 
canonical form
for alternating matrices \cite[Chap. XV, Sec. 8]{Lang}.

\begin{definition}[$\kappa$ and $\lambda$ for alternating matrix 
spaces]\label{def:cA}
Let $\cA\leq \Lambda(n, \F)$ be of dimension $m$. 

We define the 
\emph{restriction-orthogonal number} of $\cA$, $\kappa(\cA)$, as the 
minimum $c\in \N$ for the existence of a dimension-$(n-c)$ subspace $W\leq \F^n$, 
such that $\cA|_W$ admits an orthogonal decomposition. 

We define the 
\emph{subspace-orthogonal number} of $\cA$, $\lambda(\cA)$, as the 
minimum $c\in \N$ for the existence of a dimension-$(m-c)$ subspace $\cA'\leq 
\cA$, such that $\cA'$ admits an orthogonal decomposition. 
\end{definition}

Clearly, $\cA$ itself admits an orthogonal decomposition if and only if 
$\kappa(\cA)=\lambda(\cA)=0$. Since we defined the zero alternating matrix space 
to 
have an 
orthogonal decomposition, $\kappa(\cA)\leq n-1$ and $\lambda(\cA)\leq m$.

Suppose we are given a dimension-$m$ $\cA=\langle A_1, \dots, A_m\rangle\leq 
\Lambda(n, \F)$. We form a $3$-tensor $\mathtt{A}\in 
\F^{n\times n\times m}$ such that $\mathtt{A}(i,j,k)=A_k(i,j)$. We illustrate 
the 
existence of an orthogonal decomposition for $\cA$, the existence of $W$ such that 
$\cA|_W$ has an orthogonal decomposition, and the existence of $\cA'\leq \cA$ with 
an orthogonal decomposition, up to appropriate basis changes, in 
Figure~\ref{fig: 3-tensor formulation}.

\begin{figure}[ht]
    \centering
    \begin{subfigure}[b]{0.28\textwidth}
        \includegraphics[width=.9\textwidth]{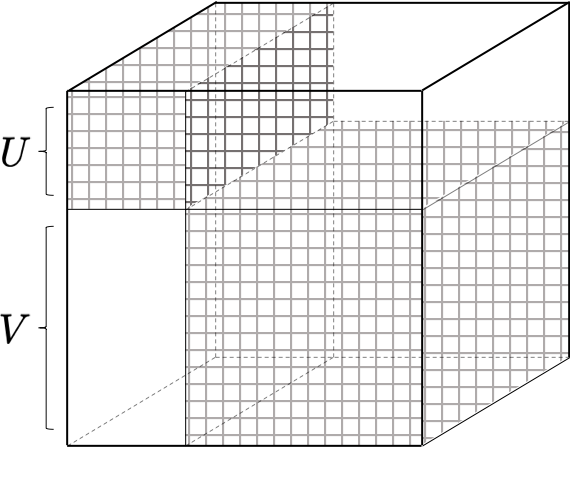}
        \caption{$\kappa(\cA)=\lambda(\cA)=0$}
        \label{fig: disconnected}
    \end{subfigure}
\qquad   ~ 
    \begin{subfigure}[b]{0.3\textwidth}
        \includegraphics[width=\textwidth]{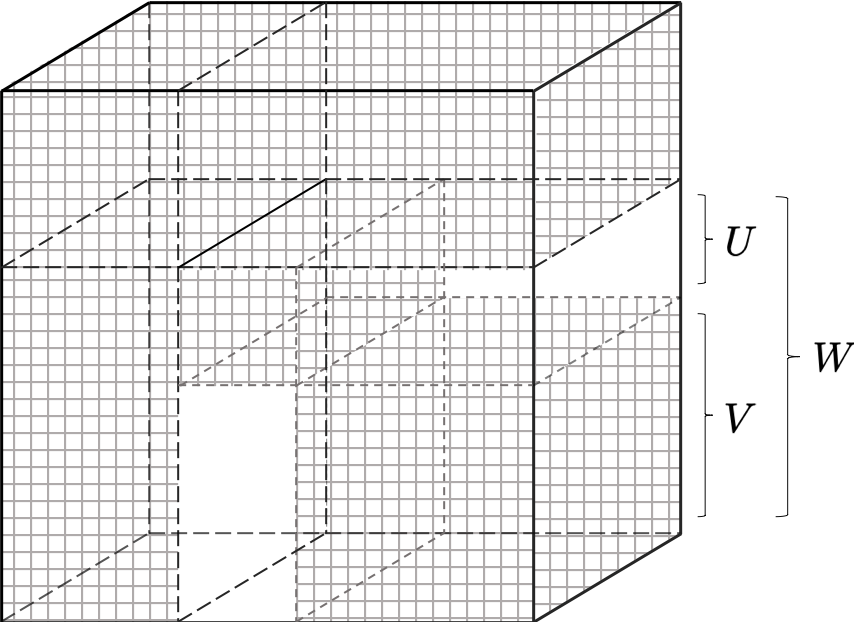}
        \caption{$\kappa(\cA)\leq n-\dim(W)$}
        \label{fig: vertex connectivity}
    \end{subfigure}
    ~ 
    \begin{subfigure}[b]{0.3\textwidth}
        \includegraphics[width=\textwidth]{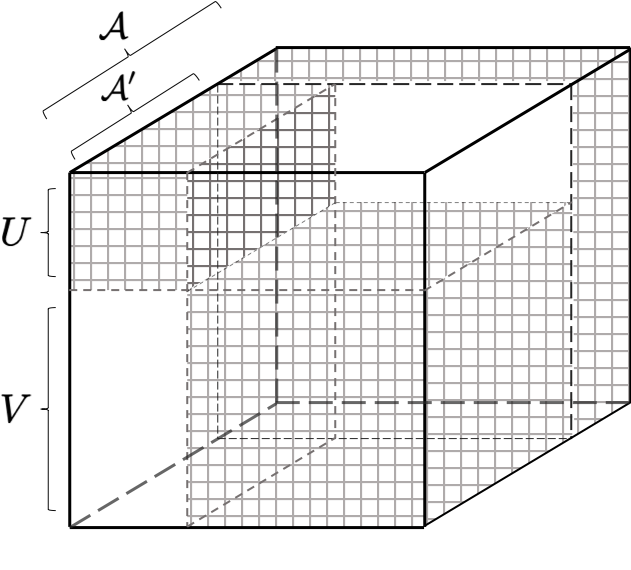}
        \caption{$\lambda(\cA)\leq m-\dim(\cA')$}
        \label{fig: edge connectivity}
    \end{subfigure}
    \caption{Pictorial descriptions of the alternating matrix space 
    parameters. The white regions indicate that the entries there are all zero. 
    For example, in (a), suppose $U\oplus V$ is an orthogonal decomposition for 
    $\cA$. Then up to a change of basis, the upper-right and the lower-left 
    corners of $\mathtt{A}$ have all-zero entries. (b) and (c) also indicate the 
    situations with appropriate changes of bases.}\label{fig: 
    3-tensor formulation}
\end{figure}

\paragraph{Some notions for alternating bilinear maps.} We introduce basic 
concepts, and then define $\kappa$ and 
$\lambda$, for alternating bilinear maps.

Let $\phi, 
\psi:\F^n\times \F^n\to \F^m$ be two alternating bilinear maps. Following 
\cite{Wil09}, we say that $\phi$ 
and 
$\psi$ are \emph{pseudo-isometric}, if they are the same under the natural action of 
$\GL(n, \F)\times\GL(m, \F)$. 

For $U\leq \F^n$, $\phi$ naturally restricts to $U$ to give $\phi|_U:U\times U\to 
\F^m$. For $X\leq \F^m$, $\phi$ naturally induces $\phi/_X:\F^n\times \F^n\to 
\F^m/X$ by 
composing 
$\phi$ with the projection from $\F^m$ to $\F^m/X$. 

Let $\phi:\F^n\times \F^n\to \F^m$ be an alternating bilinear map. An orthogonal 
decomposition of $\phi$ is a direct sum decomposition of $\F^n=U\oplus V$, 
such 
that for any $u\in U$, $v\in V$, we have $\phi(u, v)=0$. In the following, unless 
stated otherwise, we always 
assume an orthogonal decomposition of $\phi$ to be non-trivial, i.e., neither $U$ 
nor $V$ is the trivial space. 
\begin{definition}[$\kappa$ and $\lambda$ for alternating bilinear maps]
Let $\phi:\F^n\times \F^n\to \F^m$ be an alternating bilinear map. 

The \emph{restriction-orthogonal number} of $\phi$, $\kappa(\phi)$, is the minimum 
$c\in\N$ for the existence of a dimension-$(n-c)$ subspace $U\leq \F^n$, such that 
$\phi|_U$ admits an orthogonal decomposition. 

The \emph{quotient-orthogonal 
number} of $\phi$, $\lambda(\phi)$, is the minimum $c\in \N$ for the existence of 
a dimension-$c$ $X\leq  \F^m$, such that $\phi/_X$ admits an orthogonal 
decomposition.
\end{definition}

\begin{remark}[From alternating matrix spaces to bilinear maps]\label{rem:trans}
This connection is simple but may deserve some discussion. Recall that, given an 
$m$-dimensional alternating matrix space $\cA\leq \Lambda(n, 
\F)$, we can fix an ordered basis of $\cA$ as $\bA=(A_1, \dots, A_m)\in \Lambda(n, 
\F)^m$, and construct an alternating bilinear map $\phi_\bA:\F^n\times\F^n\to\F^m$ 
as in Equation~\ref{eq:bil_def}. Furthermore, $\phi_\bA(\F^n, \F^n)=\F^m$ because 
$\cA$ is of dimension $m$. 
%

In the above transformation, we shall need $\bA\in\Lambda(n, \F)^m$ as an 
intermediate object. For a different ordered bases $\bA'$, $\phi_{\bA'}$ is 
pseudo-isometric to $\phi_{\bA}$. Because of this, we shall write $\phi_\cA$ to 
indicate $\phi_\bA$ with some ordered basis $\bA$ of $\cA$.

%
Furthermore, if $\cA$ and $\cB$ are isometric and $\bA$ (resp. $\bB$) is an 
ordered basis for $\cA$ (resp. $\cB$), then $\phi_\cA$ and  $\phi_\cB$ are 
pseudo-isometric as well.
\end{remark}


%
%
%

\subsection{Proof of Theorem~\ref{thm:main}}

The proof of Theorem~\ref{thm:main} goes by showing that the 
parameters $\kappa(\cdot)$ and $\lambda(\cdot)$ defined for graphs, alternating 
matrix spaces, alternating bilinear maps, and groups from $\fB_{p,2}$, are 
preserved in the three steps of the Baer-Lov\'asz-Tutte procedure. The first step, 
from graphs to alternating matrix spaces, is the tricky one, at least for 
$\lambda(\cdot)$. The other two 
steps are rather straightforward.

\paragraph{From graphs to alternating matrix spaces.} 

\begin{proposition}\label{prop:kappa_G_cA}
Let $G=([n], E)$ be a graph, and let $\cA_G\leq \Lambda(n, \F)$ be defined in 
Step~\ref{item: BLT step 1}.
Then $\kappa(G)=\kappa(\cA_G)$.
\end{proposition}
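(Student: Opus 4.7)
The plan is to prove the two inequalities $\kappa(\cA_G) \le \kappa(G)$ and $\kappa(G) \le \kappa(\cA_G)$ separately.

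For the easy direction $\kappa(\cA_G) \le \kappa(G)$, I would start with a minimum vertex cut $S \subseteq [n]$ of $G$ and take $W = \langle e_i : i \notin S\rangle$, which has dimension $n - \kappa(G)$. Using the restriction matrix $T$ whose columns are $\{e_i : i \notin S\}$, one recognises $\cA_G|_W$ as the alternating matrix space of the induced subgraph $G \setminus S$, and the partition of $[n] \setminus S$ according to the connected components of $G \setminus S$ immediately supplies an orthogonal decomposition of $\cA_G|_W$. The only degenerate situation, in which $G \setminus S$ has a single vertex (which occurs when $G$ is complete), is absorbed by the convention that the zero alternating matrix space is deemed to admit an orthogonal decomposition.

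For the harder direction $\kappa(G) \le \kappa(\cA_G)$, set $c = \kappa(\cA_G)$ and fix $W \le \F^n$ of dimension $n-c$ together with a non-trivial orthogonal decomposition $W = U \oplus V$ of $\cA_G|_W$, viewing $U$ and $V$ as subspaces of $\F^n$. Put $S_0 = \{i \in [n] : w_i = 0 \text{ for every } w \in W\}$, which satisfies $|S_0| \le n - \dim W = c$, and partition $[n] \setminus S_0$ into the three blocks $P \setminus Q$, $Q \setminus P$ and $P \cap Q$, where $P = \{i : u_i \neq 0 \text{ for some } u \in U\}$ and $Q = \{i : v_i \neq 0 \text{ for some } v \in V\}$. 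The key observation is that for every edge $\{i,j\} \in E$, the orthogonality identity $u_i v_j - u_j v_i = 0$ valid for all $u \in U$, $v \in V$ is equivalent to the tensor equation $\alpha_i \otimes \beta_j = \alpha_j \otimes \beta_i$ in $U^* \otimes V^*$, where $\alpha_i \in U^*$ and $\beta_i \in V^*$ are the coordinate functionals $u \mapsto u_i$ and $v \mapsto v_i$. A short case analysis on which of $\alpha_i, \alpha_j, \beta_i, \beta_j$ vanish rules out any edge whose endpoints lie in two different blocks of the partition $(P \setminus Q,\, Q \setminus P,\, P \cap Q)$.

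Consequently $G \setminus S_0$ splits into subgraphs supported on these three blocks, and $S_0$ is a vertex cut of size at most $c$ in all situations except the residual case in which $P = Q = P \cap Q$ and $G[P \cap Q]$ is connected. I expect the main obstacle to be exactly this case: for each internal edge $\{i,j\}$, all four functionals $\alpha_i, \alpha_j, \beta_i, \beta_j$ are non-zero, and the tensor identity then forces a common scalar $\lambda_{ij} \in \F \setminus \{0\}$ with $\alpha_i = \lambda_{ij} \alpha_j$ on $U$ and $\beta_i = \lambda_{ij} \beta_j$ on $V$. I plan to propagate these matched proportionalities along a spanning tree of $G[P \cap Q]$, concluding that all the functionals $\alpha_i$ with $i \in P$ lie in a single one-dimensional subspace of $U^*$; since the $\alpha_i$'s collectively span $U^*$, this forces $\dim U = 1$, and likewise $\dim V = 1$. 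The matched scalars then force the generators of $U$ and $V$ to be proportional in $\F^n$, contradicting $U \cap V = \{0\}$. This contradiction eliminates the pathological case and completes the proof.
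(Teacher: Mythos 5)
Your proposal is correct, and while the easy inequality $\kappa(\cA_G)\leq\kappa(G)$ coincides with the paper's argument, your proof of $\kappa(G)\leq\kappa(\cA_G)$ is a genuinely different route to the same degeneracy issue. The paper also starts from $W=U\oplus V$ and the identities $u_iv_j^t-u_jv_i^t=0$ for edges $\{i,j\}$, but it then selects a full-rank $d\times d$ row-submatrix $R$ of the basis matrix $T=\begin{bmatrix}T_1 & T_2\end{bmatrix}$ and invokes the generalized Laplace expansion to split the chosen row set $I$ into $I_1\uplus I_2$ with both square blocks invertible; the cut is $[n]\setminus I$, of size exactly $\kappa(\cA_G)$, and the troublesome case $u_j=\alpha u_i$, $v_j=\alpha v_i$ is killed instantly because rows $i$ and $j$ of the full-rank $R$ cannot be proportional. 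You instead take the support-theoretic cut $S_0$ (of size at most $\kappa(\cA_G)$, possibly smaller) and the three-block partition $(P\setminus Q,\,Q\setminus P,\,P\cap Q)$, which requires no submatrix selection but leaves the residual case $P=Q$ with $G[P]$ connected; your spanning-tree propagation of the matched scalars correctly forces $\dim U=\dim V=1$ with proportional generators, contradicting the directness of $U\oplus V$. So the paper buys non-degeneracy up front via linear algebra (independence of selected rows), whereas you buy a cleaner partition up front and pay for it with a connectivity-driven propagation at the end. When writing it up, do make explicit the two small checks you are implicitly using: $P$ and $Q$ are both nonempty because $U,V\neq 0$, which rules out the single-block cases $P\setminus Q$ or $Q\setminus P$ alone, and $n-|S_0|\geq\dim W\geq 2$, so the disconnection is genuine.
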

\begin{proof}
We first show $\kappa(\cA_G)\leq \kappa(G)$. Let $I\subseteq[n]$
be a subset 
of 
vertices of size $d=n-\kappa(G)$, such that the induced subgraph of $G$ on $I$ is 
disconnected. Let $W=\langle {e_i}:i\in I\rangle$, and $T$ be 
the $n\times d$ matrix over $\F$ whose columns are ${e_i}\in \F^n$, $i\in I$.
It is straightforward to verify that $\cA_G|_{W,T}$ admits an orthogonal 
decomposition.

We then show $\kappa(\cA_G)\geq\kappa(G)$. Let $W\leq \F^n$ be a subspace of 
dimension $d=n-\kappa(\cA_G)$, such that $\cA|_W$ admits an orthogonal 
decomposition. That is, there exists $W=U\oplus V$ such that 
\begin{equation}\label{eq:ortho}
\forall u\in U, v\in 
V, \forall A\in \cA, u^tAv=0.
\end{equation}
 Suppose $\dim(U)=b$ and $\dim(V)=c$, so 
$d=b+c$. Construct an $n\times d$ matrix $T=\begin{bmatrix}T_1 &T_2\end{bmatrix}$ where $T_1$ (resp. $T_2$) 
is of size 
$n\times b$ (resp. $n\times c$) and its columns form a basis of $U$ (resp. $V$).
Let the $i$th row of $T_1$ be $u_i^t$ where $u_i\in \F^b$, and let 
the $j$th row of 
$T_2$ be $v_j^t$ where $v_j\in \F^c$, 
for $i,j\in[n]$. Then by Equation~\ref{eq:ortho}, for any 
$\{i,j\}\in E$,
\begin{equation}\label{eq:kappa_G}
T_1^t({e_i}e_j^t-{e_j}e_i^t)T_2=u_iv_j^t-u_jv_i^t
\end{equation}
is the all-zero matrix of 
size $b\times c$. 

Because $T$ is of rank $d$, there exists a $d\times d$ submatrix 
$R$ of 
$T$ of rank $d$. Let $I\subseteq[n]$ be the set of row indices 
of this submatrix 
$R$. 
We claim that the induced subgraph of $G$ on $I$, 
$G[I]$, is disconnected. 
To show this, we exhibit a partition of $I=I_1\uplus I_2$ such that no edges in 
$G[I]$ 
go 
across $I_1$ and $I_2$. 

Recall that $R$ is of rank $d$. As an easy consequence of the Laplace expansion, 
there exists a partition of $I$, $I=I_1\uplus I_2$ with $|I_1|=b$, $|I_2|=d-b=c$, such that 
the following holds. Let $R_1$ 
be the $b\times b$ submatrix of $R$ with row indices 
from $I_1$ and column indices from $[b]$, and $R_2$ 
the $c\times c$ submatrix of $R$ 
with row indices from $I_2$ and column indices from $[d]\setminus [b]$. Then $R_1$ 
and 
$R_2$
are both full-rank. 
Note that $\{u_i^t:i\in I_1\}$ is the set of rows of $R_1$ and 
$\{v_j^t:j\in I_2\}$ is the set of rows of $R_2$.

We then claim that no edges in $G[I]$ 
go across $I_1$ and $I_2$. 
By contradiction, suppose there is an edge $\{i, j\}$, $i\in I_1$ and $j\in I_2$, in 
$G[I]$. 
Then the same edge $\{i,j\}$ is also in $G$.
By Equation~\ref{eq:kappa_G}, we have 
$u_iv_j^t-u_jv_i^t$ is the all-zero matrix. Since $R_1$ and $R_2$ are full-rank, we 
have $u_i$ and $v_j$ are nonzero vectors. This implies that $u_j=\alpha u_i$ and 
$v_i=(1/\alpha) v_j$ for some nonzero $\alpha\in \F$. But this implies that 
$\begin{bmatrix}
u_j^t & v_j^t
\end{bmatrix}=\alpha \begin{bmatrix}
u_i^t & v_i^t
\end{bmatrix}$, that is,  the $i$th and $j$th rows of $T$
are linearly 
dependent. Noting that these rows are in $R$ which is full-rank, we arrive at the 
desired contradiction. This concludes the proof. 
\end{proof}

\begin{proposition}\label{prop:lambda_G_cA}
Let $G=([n], E)$ be a graph, and let $\cA_G\leq \Lambda(n, \F)$ be defined in 
Step~\ref{item: BLT step 1}. Then $\lambda(G)=\lambda(\cA_G)$.
\end{proposition}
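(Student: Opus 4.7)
The plan is to prove both $\lambda(\cA_G)\le\lambda(G)$ and $\lambda(G)\le\lambda(\cA_G)$.

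The easier direction $\lambda(\cA_G)\le\lambda(G)$ parallels the corresponding step in the proof of Proposition~\ref{prop:kappa_G_cA}: given a minimum edge cut $F\subseteq E$ realizing a bipartition $[n]=I\sqcup J$, set $\cA':=\langle A_{i,j} : \{i,j\}\in E\setminus F\rangle$, of dimension $m-\lambda(G)$. Every edge of $E\setminus F$ lies entirely within $I$ or within $J$, so $\F^n=\langle e_i:i\in I\rangle\oplus\langle e_j:j\in J\rangle$ is an orthogonal decomposition of $\cA'$.

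For the reverse direction, I fix any non-trivial $\F^n=U\oplus V$ such that $\cA'_{U,V}:=\{A\in\cA_G : u^tAv=0\ \forall u\in U,v\in V\}$ has codimension $c$ in $\cA_G$, and construct an edge cut of $G$ of size $\le c$. Setting $f_i:=e_i^*|_U\in U^*$ and $g_i:=e_i^*|_V\in V^*$, the pairs $(f_i,g_i)_{i\in[n]}$ form a basis of $U^*\oplus V^*\cong(\F^n)^*$, and the map $\Phi:\cA_G\to U^*\otimes V^*$ with $\Phi(A_{i,j})=f_i\otimes g_j-f_j\otimes g_i$ has rank $c$. Applying Edmonds' matroid partition theorem to the two linear matroids on $[n]$ determined by $\{f_i\}\subseteq U^*$ and $\{g_i\}\subseteq V^*$ — whose joint-coverage hypothesis $\dim\mathrm{span}\{f_i\}_{i\in S}+\dim\mathrm{span}\{g_i\}_{i\in S}\ge|S|$ for every $S\subseteq[n]$ follows from the basis property of the pairs — yields a partition $[n]=I\sqcup J$ with $\{f_i\}_{i\in I}$ a basis of $U^*$ and $\{g_j\}_{j\in J}$ a basis of $V^*$.

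It remains to show $|E(I,J)|\le c$. Expanding each cross-edge image $\Phi(A_{i,j})$ for $\{i,j\}\in E(I,J)$ in the basis $\{f_k\otimes g_\ell:k\in I,\ell\in J\}$ of $U^*\otimes V^*$ produces a coefficient matrix of the form $I-M$, where $M$ records the expansions of $\{f_j\}_{j\in J}$ in $\{f_k\}_{k\in I}$ and of $\{g_i\}_{i\in I}$ in $\{g_\ell\}_{\ell\in J}$. If the restriction of $I-M$ to cross-edge rows and columns is invertible, linear independence of the cross-edge images in $\mathrm{image}(\Phi)$ follows, hence $|E(I,J)|\le c$. If it is not invertible, any non-trivial dependence among cross-edge images exhibits a swap of endpoints between $I$ and $J$ that preserves the matroid-partition property while strictly reducing $|E(I,J)|$, and one iterates to termination.

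The main obstacle is this last step. Small examples already show that the cross-edge restriction of $I-M$ can fail to be invertible for a given matroid partition, so the argument must either rigorously justify the exchange refinement sketched above or bypass matroid partition altogether — for instance by inducting on $c$, with the base case $c=0$ handled directly (vanishing of $\Phi$ forces each pair $(f_i,g_i)$ to satisfy $f_i=0$ or $g_i=0$, and the resulting partition of $[n]$ disconnects $G$).
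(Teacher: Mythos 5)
Your first direction and your reformulation of $\lambda(\cA_G)$ as the minimum of $\dim(\cC_{U,V}(\cA_G))$ over non-trivial decompositions $\F^n=U\oplus V$ (the rank of your map $\Phi$) both match the paper, which records the latter as Equation~\ref{eq:alt_def}. But the gap you flag at the end is genuine and is exactly where the difficulty of the proposition lives. For a partition $[n]=I\uplus J$ obtained by basis exchange (your matroid-partition step; in the paper this is the Laplace-expansion argument already used in the proof of Proposition~\ref{prop:kappa_G_cA}), the cross-edge images $\Phi(A_{i,j})=f_i\otimes g_j-f_j\otimes g_i$ need not be linearly independent: the correction term $-f_j\otimes g_i$ is nonzero whenever $e_j$ has a nonzero $U$-component for some $j\in J$, or $e_i$ a nonzero $V$-component for some $i\in I$, and these corrections can create dependences. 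Neither the ``swap endpoints and iterate'' refinement nor the induction on $c$ is carried out, and the swap step is not obviously sound (a dependence among cross-edge images does not visibly yield an exchange that preserves both basis conditions and strictly decreases $|E(I,J)|$). As written, the inequality $\lambda(\cA_G)\geq\lambda(G)$ is therefore not proved.

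The paper closes this gap with a different dichotomy, on the decomposition $U\oplus V$ itself rather than on a partition of the vertices. Write $e_i=(u_i,v_i)$ for the $U$- and $V$-components, i.e.\ $u_i^t$ and $v_i^t$ are the $i$th rows of the matrices $T_1$ and $T_2$ whose columns span $U$ and $V$. If for every $i$ exactly one of $u_i,v_i$ is nonzero, the induced partition $[n]=I_1\uplus I_2$ makes every cross-edge contribute a pure tensor $u_iv_j^t$ with $\{u_i\}_{i\in I_1}$ and $\{v_j\}_{j\in I_2}$ linearly independent, so the cross-edge images are automatically independent and number at most $d=\dim(\cC_{U,V}(\cA_G))$. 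Otherwise some vertex $i$ has $u_i\neq 0$ and $v_i\neq 0$; assuming $\lambda(G)>d$, the classical bound $\lambda(G)\leq\delta(G)$ supplies $d+1$ edges at $i$, the corresponding matrices $u_iv_{j_k}^t-u_{j_k}v_i^t$ must be dependent, and the resulting rank-one identity $u_i\bigl(\sum_k\alpha_k v_{j_k}^t\bigr)=\bigl(\sum_k\alpha_k u_{j_k}\bigr)v_i^t$ forces a dependence among rows of the full-rank matrix $\begin{bmatrix}T_1 & T_2\end{bmatrix}$, a contradiction. Your base case $c=0$ is essentially the first branch of this dichotomy; the second branch is the idea your proposal is missing.
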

\begin{proof} 
We first show $\lambda(\cA_G)\leq\lambda(G)$. Let $D$
be a size-$\lambda(G)$ 
subset of $E$ such that $G'=([n],E\setminus D)$ is disconnected. Let 
$\cA_{G'}=\langle A_{i,j}:\{i,j\}\in E\setminus D\rangle\leq \cA_G$.
It is 
straightforward to verify that $\cA_{G'}$ 
admits an orthogonal decomposition.

We then show $\lambda(\cA_G)\geq\lambda(G)$. 
For this, it is convenient to 
introduce an equivalent 
formulation of $\lambda(\cdot)$ for alternating matrix spaces, which is originated 
from \cite{LiQ2017}.

Given a direct sum 
decomposition 
$\F^n=U\oplus V$ with $\dim(U)=b$ and $\dim(V)=c=n-b$, let $T_1$ (resp.
$T_2$) be a $n\times b$ (resp. $n\times c$) matrix whose columns form a basis of $U$ (resp. $V$). Given an 
$m$-dimensional $\cA\leq \Lambda(n, \F)$, let 
$\cC_{U,V,T_1, T_2}(\cA)=\{T_1^tA T_2:A\in \cA\}\leq 
\M(b\times c, \F)$. 
Note that 
different choices of $T_1$ and $T_2$ result in a subspace of $\M(b\times c, \F)$ 
which can be transformed to $\cC_{U, V, T_1, T_2}(\cA)$ by left-multiplying some 
$X\in \GL(b, \F)$ and right-multiplying some $Y\in\GL(c, \F)$. So we can write $\cC_{U, 
V}$ to indicate $\cC_{U, V, T_1, T_2}$ via some such $T_1$ and $T_2$. 
We claim that 
\begin{equation}\label{eq:alt_def}
\lambda(\cA)=\min\{\dim(\cC_{U,V}(\cA)):\forall\text{ non-trivial } 
\F^n=U\oplus V\}.
\end{equation}
To see 
this, let $\cA'\leq\cA$ be of dimension $m-\lambda(\cA)$ which 
admits an orthogonal decomposition $\F^n=U\oplus V$. It is easy to verify that 
$\dim(\cC_{U,V}(\cA))\leq m-(m-\lambda(\cA))=\lambda(\cA)$. On the other hand, let 
$\F^n=U\oplus V$ be a direct sum decomposition such that $\dim(\cC_{U,V}(\cA))$ is 
minimal. 
Let $T_1$ (resp.
$T_2$) be a matrix whose columns form a basis of $U$ (resp. $V$).
Let 
$\cA'=\{A\in\cA:T_1^tAT_2=0\}$. 
We then have $\dim(\cA')=m-\dim(\cC_{U,V}(\cA))$, 
and clearly
$\cA'$ admits an orthogonal decomposition. This gives $\lambda(\cA)\leq 
m-\dim(\cA')=\dim(\cC_{U,V}(\cA))$.

After introducing this formulation, let $\F^n=U\oplus V$ be a direct sum 
decomposition with 
$\dim(U)=b$ and $\dim(V)=c=n-b$, such that 
$\dim(\cC_{U,V}(\cA_G))=\lambda(\cA_G)=d$. 
Construct an $n\times n$ full-rank matrix $T=\begin{bmatrix}
T_1 & T_2
\end{bmatrix}$ where $T_1$ (resp.
$T_2$) is a $n\times b$ (resp. $n\times c$) matrix whose columns form a basis of $U$ (resp. $V$).
Let the $i$th row of $T_1$ be $u_i^t$ where $u_i\in \F^b$, and let the $j$th row 
of $T_2$ be $v_j^t$ where $v_j\in \F^c$. 
We distinguish two cases:
\begin{enumerate}
\item Suppose for any $i\in[n]$, $u_i\neq 0$ if and only if $v_i=0$. Then there 
exists $[n]=I_1\uplus I_2$ with $|I_1|=b$ and $|I_2|=c$, such that $i\in I_1$ if 
and only if $u_i\neq 
0$, and $j\in I_2$ if and only if $v_j\neq 0$. Furthermore, vectors in $\{u_i: i\in I_1\}$ are 
linearly independent, and vectors in $\{v_j: j\in I_2\}$ are linearly independent.
We claim that there is no more than $d$ edges of $G$ crossing $I_1$ and $I_2$. Suppose 
not, then there exists 
$\{\{i_1, j_1\}, \dots, \{i_{d+1}, j_{d+1}\}\}\subseteq E$, such that $i_k\in I_1$, 
and $j_k\in I_2$ for $k\in[d+1]$. Note that 
\begin{equation}\label{eq:proof_lambda}
T_1^t({e_{i_k}}e_{j_k}^t-{e_{j_k}}e_{i_k}^t)T_2
=u_{i_k}v_{j_k}^t-u_{j_k}v_{i_k}^t
=u_{i_k}v_{j_k}^t\in \cC_{U,V}(\cA_G)
\end{equation}
for all $k\in[d+1]$. It is straightforward to verify that $u_{i_k}v_{j_k}^t$, 
$k\in[d+1]$, are linearly independent, which contradicts that $\cC_{U,V}(\cA_G)$ 
is of 
dimension $d$. 
\item Suppose there exists $i\in[n]$, such that both $u_{i}$ and $v_{i}$ are 
nonzero. 
Suppose by contradiction that $\lambda(G)>d$. It follows that the vertex $i$ is of 
degree at least $d+1$. 
Suppose $i$ is adjacent to $j_1, 
\dots, 
j_{d+1}\in [n]$. Then $u_{i}v_{j_k}^t-u_{j_k}v_{i}^t\in \cC_{U,V}(\cA_G)$ for 
$k\in[d+1]$ by Equation~\ref{eq:proof_lambda}. Since $\dim(\cC_{U,V}(\cA_G))=d$, 
the matrices
$u_{i}v_{j_k}^t-u_{j_k}v_{i}^t$, $k\in[d+1]$, are linearly dependent. It 
follows that there exist 
$\alpha_k\in\F$ for $k\in[d+1]$, at least one of which is nonzero, such that 
$$\sum_{k=1}^{d+1}\alpha_k(u_{i}v_{j_k}^t-u_{j_k}v_{i}^t)=0.$$ 
This implies that 
$$
u_{i}(\sum_{k=1}^{d+1}\alpha_kv_{j_k}^t)=(\sum_{k=1}^{d+1}\alpha_k 
u_{j_k})v_{i}^t$$
as two rank-$1$ matrices. 
From the above, and by the assumption that $u_{i}$ and $v_{i}$ are nonzero, we 
have 
that $\beta u_{i}= \sum_{k=1}^{d+1}\alpha_k u_{j_k}$ and 
$\beta v_{i}= \sum_{k=1}^{d+1}\alpha_kv_{j_k}$ for some nonzero $\beta\in \F$. 
Since at least one of $\alpha_k$'s is nonzero, this means that the rows in $T$ 
with indices 
$\{i, j_1, \dots, j_{d+1}\}$ are linearly dependent,
which contradicts that $T$ is full-rank. 
\end{enumerate}
These conclude the proof that $\lambda(\cA_G)\geq \lambda(G)$.
\end{proof}
\begin{remark}[Cuts in alternating matrix spaces]\label{rem:cut}
The alternative formulation of $\lambda(\cdot)$ as in Equation~\ref{eq:alt_def} 
rests on a natural generalisation of the notion of cuts in graphs. 
Proposition~\ref{prop:lambda_G_cA} 
then indicates that for an alternating 
matrix space $\cA_G$ constructed from a graph $G$, the minimum 
cut sizes of $\cA_G$ and $G$ are equal.
\end{remark}

\paragraph{From alternating matrix spaces to alternating bilinear maps.}
We now relate the parameters $\kappa(\cdot)$ and $\lambda(\cdot)$ for alternating 
matrix spaces and alternating bilinear maps in the following easy proposition. 
Note that we use the notation $\phi_\cA$ due to the discussions in 
Remark~\ref{rem:trans}.
\begin{proposition}\label{prop:space_map}
For an $m$-dimensional $\cA\leq \Lambda(n, \F)$, let an alternating bilinear map 
$\phi_\cA:\F^n\times \F^n\to\F^m$ be defined in Step~\ref{item: BLT step 2}.
 Then we have $\kappa(\cA)=\kappa(\phi_\cA)$, 
and $\lambda(\cA)=\lambda(\phi_\cA)$.
\end{proposition}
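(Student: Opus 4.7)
The plan is to fix an ordered basis $\bA = (A_1, \dots, A_m)$ of $\cA$ and work with the explicit formula $\phi_\cA(v, u) = (v^t A_1 u, \dots, v^t A_m u)^t$. This basis identifies $\cA$ with $\F^m$ via the coordinate map $\sum_i \alpha_i A_i \mapsto \alpha = (\alpha_1,\dots,\alpha_m)^t$, under which $v^t A u = \langle \alpha, \phi_\cA(v, u)\rangle$ for every $A\in\cA$. Since $\kappa(\cdot)$ and $\lambda(\cdot)$ are pseudo-isometry invariants of $\phi_\cA$ (Remark~\ref{rem:trans}), the choice of basis is immaterial.

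The identity $\kappa(\cA) = \kappa(\phi_\cA)$ is then immediate. A direct sum decomposition $W = U \oplus V$ is orthogonal for $\cA|_W$ iff $v^t A u = 0$ for all $A \in \cA$, $u \in U$, $v \in V$; by the displayed formula this is equivalent to the vector $\phi_\cA(v, u)$ vanishing on $U \times V$, i.e., to $W = U \oplus V$ being orthogonal for $\phi_\cA|_W$. Hence the two parameters extremise the same quantity over the same set of witnesses.

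For $\lambda$, the plan is to attach to each non-trivial decomposition $\F^n = U \oplus V$ the auxiliary subspace $Y_{U,V} := \langle \phi_\cA(u, v) : u \in U,\ v \in V \rangle \leq \F^m$. Two parallel observations tie $Y_{U,V}$ to both sides. First, under the coordinate identification $\cA \cong \F^m$, a subspace $\cA' \leq \cA$ admits $U \oplus V$ as an orthogonal decomposition iff its coordinate image is contained in the annihilator $Y_{U,V}^{\perp}$, so the largest such $\cA'$ has codimension exactly $\dim(Y_{U,V})$ in $\cA$. Second, by the definition of $\phi_\cA/_X$, this quotient map admits $U \oplus V$ as an orthogonal decomposition iff $Y_{U,V} \subseteq X$, so the smallest such $X$ is $Y_{U,V}$ itself. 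Combining these, both $\lambda(\cA)$ and $\lambda(\phi_\cA)$ equal $\min_{U,V} \dim(Y_{U,V})$, the minimum being over non-trivial direct sum decompositions $\F^n = U \oplus V$.

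I do not anticipate any serious obstacle; the proposition is essentially a translation lemma with no combinatorial content of the sort that made Proposition~\ref{prop:lambda_G_cA} delicate. The only subtlety is keeping clear the distinction between a subspace of $\cA$ and its coordinate vector of coefficients in $\F^m$, and this is exactly what the passage through $Y_{U,V}$ and its annihilator accomplishes.
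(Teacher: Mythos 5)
Your proof is correct, but it is organised differently from the paper's. The paper proves the two inequalities for $\lambda$ separately, each by an explicit change of ordered basis of $\cA$ (respectively of $\F^m$): given $\cA'$ it places a basis of $\cA'$ first and takes $X$ to be the span of the last $c$ coordinate vectors; given $X$ it rotates the basis of $\F^m$ so that $X$ sits in the last $c$ coordinates and reads off $\cA'$ from the first $m-c$ new basis matrices. You instead give a single symmetric argument: for each non-trivial $\F^n=U\oplus V$ you identify the minimal admissible $X$ as $Y_{U,V}=\langle \phi_\cA(u,v): u\in U, v\in V\rangle$ and the maximal admissible $\cA'$ as the preimage of the annihilator $Y_{U,V}^{\perp}$ under the coordinate isomorphism $\cA\cong\F^m$, so that both parameters equal $\min_{U,V}\dim(Y_{U,V})$. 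This is a genuine streamlining: it replaces two basis manipulations by one duality statement, and it makes visible that your common value coincides with the cut formulation of Equation~\ref{eq:alt_def}, since $\dim(Y_{U,V})=\dim(\cC_{U,V}(\cA))$ (the map $A\mapsto T_1^tAT_2$ has kernel equal to the preimage of $Y_{U,V}^{\perp}$). The paper's version, on the other hand, produces the witnesses $X$ and $\cA'$ completely explicitly in coordinates, which is marginally more concrete. Your treatment of $\kappa$ matches what the paper leaves as ``straightforward to verify'', and your handling of the degenerate zero-subspace convention is consistent with the paper's.
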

\begin{proof}
The equality $\kappa(\cA)=\kappa(\phi_\cA)$ is straightforward to verify. 

To show that $\lambda(\cA)\geq\lambda(\phi_\cA)$,
let 
$\cA'\leq \cA$ be a dimension-$(n-\lambda(\cA))$ subspace of $\cA$ admitting an 
orthogonal decomposition. Let $c=\lambda(\cA)$. We fix an ordered basis of $\cA$, 
$\bA=(A_1, \dots, A_m)$, 
such that $\{A_1, \dots, A_{m-c}\}$ spans $\cA'$. Let $X\leq \F^m$ be the linear 
span of the last $c$ standard basis vectors. 
We claim that $\phi_\bA/_X$ admits an 
orthogonal decomposition. 
Indeed, let $U\oplus V$ be an orthogonal decomposition of $\cA'$. Then for any 
$u\in U, v\in V$, we have $\phi_\bA(u, v)\in X$, which means that $U\oplus V$ is 
also an orthogonal decomposition for $\phi_\bA/_X$. 

To show that $\lambda(\cA)\leq\lambda(\phi_\cA)$,
let $\bA=(A_1, \dots, A_m)$ be 
an ordered basis of $\cA$, and let $c=\lambda(\phi_\cA)$. Let $X$ be a 
dimension-$c$ 
subspace of $\F^m$, such that $\phi_\bA/_X$ admits 
an orthogonal decomposition $\F^n=U\oplus V$. That is, for any $u\in U$ and $v\in 
V$, $\phi_\bA(u, v)\in X$. 
Form an ordered basis of 
$\F^m$, $(w_1, \dots, w_m)$, where $w_i=\begin{bmatrix}
w_{i,1} \\ \vdots \\ w_{i,m}
\end{bmatrix}\in \F^m$, 
such that the last $c$ vectors form a basis of $X$. Let 
$A_i'=\sum_{j\in[m]}w_{i,j}A_j$ be another ordered basis of $\cA$, and 
$\bA'=(A_1', \dots, A_m')$. Then for any 
$u\in U$ and $v\in V$, since $\phi_\bA(u, v)\in X$, the first $m-c$ entries of 
$\phi_{\bA'}(u, v)$ are zero. In 
particular, this implies that $\F^n=U\oplus V$ 
is an orthogonal decomposition for 
$\cA'=\langle A_1', \dots, A_{m-c}'\rangle$, where $\cA'$ is of dimension $m-c$.
%
\end{proof}

\paragraph{From alternating bilinear maps to groups from $\fB_{p,2}$.} 
%
To start with, we observe the following basic properties of $\kappa$, $\lambda$, 
and 
$\delta$ for groups from $\fB_{p,2}(n, m)$.
\begin{observation}\label{obs:def}
Let $P\in \fB_{p,2}(n, m)$. Then we have the following.
\begin{enumerate}
\item Suppose $P=JK$ is a central decomposition. Let $J'=J[P,P]$, and $K'=K[P,P]$. 
Then $J'$ and $K'$ form a central decomposition of $P$, and both of them properly 
contain $[P,P]$.
\item\label{item: 2} If for a central subgroup $N$, $P/N$ 
admits a central decomposition, then $P/(N\cap [P,P])$ admits a central 
decomposition.
\item For any $g\in P$, $\deg(g)\leq n-1$.
\end{enumerate}
\end{observation}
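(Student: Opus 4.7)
The plan is to treat the three parts in order. Throughout I use that $P\in\fB_{p,2}(n,m)$ gives $[P,P]\leq \ZC(P)$, exponent $p$, and $|P|=p^{n+m}$; in particular $\Phi(P)=P^p[P,P]=[P,P]$, so the Frattini subgroup coincides with the commutator subgroup.

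For part~(1), I will verify the three defining properties of a central decomposition for the pair $(J',K')=(J[P,P],K[P,P])$. Commutation $[J',K']=1$ is immediate from centrality of $[P,P]$ and the bilinear expansion of commutators in a class-$2$ group. Generation is clear since $J'K'\supseteq JK=P$. The delicate point is properness. If $J[P,P]=P$, then since $[P,P]=\Phi(P)$ is the set of non-generators, $J$ alone would generate $P$, contradicting $J<P$. The same non-generation principle applied inside the elementary abelian quotient $P/[P,P]$ shows $J\not\subseteq[P,P]$: otherwise the image $\bar K$ would generate $P/[P,P]$, forcing $K[P,P]=P$ and then $K=P$. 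Hence $J'\supsetneq[P,P]$, and symmetrically for $K'$.

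For part~(2), the plan is to lift the decomposition from $P/N$ back to $P$ and then push down modulo $N':=N\cap[P,P]$. Write $P/N=\bar J\bar K$ non-trivially, and let $J,K\leq P$ be the preimages; both contain $N$, hence $N'$. Then $JK=P$, and $[J,K]\leq N$ by construction while $[J,K]\leq[P,P]$ holds trivially, so $[J,K]\leq N'$. Passing to $P/N'$ yields the required central decomposition, with properness of $J/N'$ and $K/N'$ inherited from that of $\bar J$ and $\bar K$.

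For part~(3), I split on whether $g\in[P,P]$. If $g\in[P,P]\leq\ZC(P)$, then $C_P(g)=P$, so $d=n+m$ and $\deg(g)=0\leq n-1$. Otherwise, since $g^p=1$ and $g\notin[P,P]$, the subgroup $\langle g\rangle[P,P]$ has order $p\cdot p^m=p^{m+1}$, and it lies inside $C_P(g)$ because $g$ centralises itself and $[P,P]\leq\ZC(P)$. Thus $|C_P(g)|\geq p^{m+1}$, i.e.\ $d\geq m+1$, giving $\deg(g)=n+m-d\leq n-1$. The main obstacle I foresee is only the bookkeeping in part~(1), specifically justifying properness via the Burnside basis identification $\Phi(P)=[P,P]$; parts~(2) and~(3) are essentially formal manipulations once the right subgroups are named.
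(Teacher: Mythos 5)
Your proposal is correct and follows essentially the same route as the paper: the Frattini argument $\Phi(P)=[P,P]$ for properness in part~(1), observing $[j,k]\in N\cap[P,P]$ to push the decomposition down in part~(2), and bounding $|C_P(g)|\geq|\langle g\rangle[P,P]|=p^{m+1}$ in part~(3). The only differences are cosmetic (e.g.\ splitting on $g\in[P,P]$ rather than $g\in\ZC(P)$, and a slightly different contradiction for $J'\supsetneq[P,P]$).
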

\begin{proof}
(1): To show that $J'$ and $K'$ form a central decomposition of $P$, we only need 
to verify that $J'$ and $K'$ are proper. For the sake of 
contradiction, 
suppose $P=J'=J[P,P]$. Since $[P,P]$ is the Frattini subgroup of $P$,
it follows 
that $J=P$, contradicting that $J$ is proper. 

To show that $J'$ properly contains $[P,P]$, again for the sake of contradiction 
suppose $J'\leq [P,P]$. Then $P=J'K'\leq [P,P]K'=K'$, a contradiction to $K'$ 
being a proper subgroup of $P$.
%
%
%

(2): If $N\leq [P,P]$, the conclusion holds trivially. Suppose otherwise. Let 
$J/N$ and $K/N$ be a 
central product of $P/N$ for $J, K\leq P$. That is, for any $j\in J$ and $k\in K$, 
$jkj^{-1}k^{-1}\in N$, so in fact $jkj^{-1}k^{-1}=[j,k]\in N\cap [P,P]$. It then 
follows easily that $J/(N\cap [P,P])$ and $K/(N\cap [P,P])$ form a central product 
of $P/(N\cap [P,P])$.

(3): If $g\in\ZC(P)$, $\deg(g)=0$. If $g\not\in\ZC(P)$, then $C_P(g)$
contains the 
subgroup generated by $g$ and $[P, P]$, which is of order $\geq p^{m+1}$.
\end{proof}

Recall that in Step~\ref{item: BLT step 3}, we start from
bilinear 
map $\phi:\F_p^n\times \F_p^n\to \F_p^m$ satisfying $\phi(\F_p^n, \F_p^n)=\F_p^m$, 
and construct $P_\phi$,
a $p$-group 
of class $2$ 
and exponent $p$. 
Then $[P_\phi, P_\phi]\cong 
\Z_p^m$, and $P_\phi/[P_\phi, P_\phi]\cong \Z_p^n$. 

It is easily checked that, by Equation~\ref{eq:gp_def}, 
subspaces of $\F_p^m$ correspond to subgroups 
of $[P_\phi, P_\phi]$, and subspaces of $\F_p^n$ correspond to subgroups of 
$P_\phi/[P_\phi, P_\phi]$. We then set up the following notation. 
For $U\leq \F_p^n$,
let $Q_U$ be the subgroup 
of $P_\phi/[P_\phi, P_\phi]$ corresponding to $U$, and 
let $S_U$ 
be the \emph{smallest} subgroup of $P_\phi$ satisfying $S_U[P,P]/[P,P]=Q_U$. Note 
that $S_U$ is regular with respect to commutation, that is, $S_U\cap [P,P]=[S_U, 
S_U]$.
For $X\leq 
\F_p^m$, 
let $N_X$ 
be the subgroup of 
$[P_\phi, P_\phi]$ corresponding to $X$. 


\begin{proposition}\label{prop:map_group}
Let $\phi:\F_p^n\times \F_p^n\to\F_p^m$ and $P_\phi\in\fB_{p,2}(n, m)$ be as 
above. 
Then $\kappa(\phi)=\kappa(P_\phi)$, and 
$\lambda(\phi)=\lambda(P_\phi)$. 
\end{proposition}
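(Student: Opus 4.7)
The plan is to exploit the correspondences $U \leftrightarrow S_U$ and $X \leftrightarrow N_X$ introduced just before the proposition, together with the explicit commutator formula $[(v_1, u_1), (v_2, u_2)] = (0, \phi(v_1, v_2))$ in $P_\phi$. Writing $\pi: P_\phi \twoheadrightarrow P_\phi/[P_\phi, P_\phi] \cong \F_p^n$ for the quotient map, this formula shows that for any subgroup $S \leq P_\phi$ with image $U := \pi(S)$, commutators in $S$ are entirely governed by $\phi|_{U \times U}$, so central decompositions of $S$ should translate into orthogonal decompositions of $\phi|_U$. The technical subtlety is that a central decomposition $S = JK$ only yields $\pi(J) + \pi(K) = U$ rather than a direct sum, so a refinement step is required.

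For $\kappa(P_\phi) \leq \kappa(\phi)$, take $U$ of dimension $n - \kappa(\phi)$ with orthogonal decomposition $U = U_1 \oplus U_2$; then $S_U = S_{U_1} \cdot S_{U_2}$ is a non-trivial central decomposition of the regular subgroup $S_U$, since generators of $[S_{U_1}, S_{U_2}]$ are of the form $(0, \phi(u_1, u_2))$ with $u_i \in U_i$, all zero by orthogonality. Since $|S_U/[S_U, S_U]| = p^{\dim U}$, this yields the bound. For the reverse, take a regular $S$ witnessing $\kappa(P_\phi)$ with central decomposition $S = JK$, and set $V_1 = \pi(J)$, $V_2 = \pi(K)$, $U = \pi(S)$, so that $V_1 + V_2 = U$ and $\phi(V_1, V_2) = 0$. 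Refining by a complement $V_2'$ of $V_1 \cap V_2$ in $V_2$ gives $U = V_1 \oplus V_2'$ with $\phi(V_1, V_2') = 0$, and the key point is non-triviality of $V_1$ and $V_2'$. This uses the Frattini equality $[S,S] = \Phi(S)$ (valid since $S \in \fB_{p,2}$): if $V_1 = 0$ then $J \leq S \cap [P,P] = [S,S] = \Phi(S)$ forces $K = S$; if $V_2 \subseteq V_1$ then $K \leq J[P,P]$ and Dedekind's identity together with regularity of $S$ yield $S \leq J[P,P] \cap S = J[S,S] = J\Phi(S)$, forcing $J = S$. Both contradict the properness of $J,K$ in $S$.

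The proof for $\lambda$ is parallel but simpler. By Observation~\ref{obs:def}(2) we may restrict to $N = N_X$ for $X \leq \F_p^m$, and inspection of Equation~\ref{eq:gp_def} applied to the quotient bilinear map shows $P_\phi/N_X \cong P_{\phi/_X}$ as groups. So it suffices that $P_\psi$ admits a central decomposition iff $\psi$ does, which is exactly the $U = \F_p^n$ case of the equivalence used for $\kappa$, and the same refinement-plus-Frattini argument applies verbatim. Matching dimensions ($|N_X| = p^{\dim X}$) yields the identity. The main obstacle throughout is precisely the refinement step converting $V_1 + V_2 = U$ into a genuine direct sum $V_1 \oplus V_2' = U$ without losing non-triviality; everything else is either the explicit correspondence setup or dimension bookkeeping.
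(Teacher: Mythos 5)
Your proposal is correct and follows essentially the same route as the paper: the correspondences $U\leftrightarrow S_U$ and $X\leftrightarrow N_X$, the explicit commutator formula, and the Frattini-subgroup argument (the paper's Observation~\ref{obs:def}~(1)--(2)) to guarantee non-triviality of the pieces. Your explicit refinement of $V_1+V_2=U$ into a genuine direct sum $V_1\oplus V_2'$ merely spells out a detail that the paper's ``it can be verified'' leaves implicit.
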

\begin{proof}
To show that $\kappa(\phi)\geq \kappa(P_\phi)$,
suppose there exists a 
$(n-\kappa(\phi))$-dimensional 
 $U\leq \F_p^n$ such that $\phi|_U$ admits an orthogonal 
decomposition. It can be verified easily that this induces a central decomposition 
for the regular subgroup $S_U\leq P_\phi$. Furthermore, by the second isomorphism 
theorem, $S_U/[S_U,S_U]=S_U/(S_U\cap[P_\phi, P_\phi])\cong 
S_U[P_\phi,P_\phi]/[P_\phi, P_\phi]=Q_U$, which is of order $p^{n-\kappa(\phi)}$.

To show that $\kappa(\phi)\leq \kappa(P_\phi)$,
suppose that a regular $S\leq 
P_\phi$ satisfying $|S/[S,S]|=p^{n-\kappa(P_\phi)}$ admits a central decomposition 
$S=JK$.
Appyling Observation~\ref{obs:def} (1) to $S$, we can assume that $J$ and $K$ both 
properly 
contain 
$[S,S]$. Let $U_S$ (resp. $U_J$, $U_K$) 
be the subspace of $\F_p^n$ corresponding 
to $S[P_\phi, P_\phi]/[P_\phi, P_\phi]$ (resp. $J[P_\phi, P_\phi]/[P_\phi, 
P_\phi]$, 
$K[P_\phi, P_\phi]/[P_\phi, 
P_\phi]$). 
Then it can be verified, using Equation~\ref{eq:gp_def}, that $U_J$ and $U_K$ form 
an 
orthogonal decomposition for $\phi|_{U_S}$. Furthermore, by the second isomorphism 
theorem, $S[P_\phi, P_\phi]/[P_\phi, 
P_\phi]\cong S/[S,S]$, which holds with $S$ replaced by 
$J$ or $K$ as well. In particular we have $\dim(U_S)=n-\kappa(P_\phi)$.
%

To show that $\lambda(\phi)\geq \lambda(P_\phi)$,
we translate a subspace $X\leq 
\F_p^m$ 
such that $\phi/_X$ admits an orthogonal decomposition, to a subgroup 
$N_X\leq 
[P_\phi, P_\phi]$. 
Then it can be verified easily that the orthogonal 
decomposition of $\phi/_X$ yields a central decomposition of $P_\phi/N_X$. 

To show that $\lambda(\phi)\leq \lambda(P_\phi)$,
suppose $N\leq P_\phi$ 
is a 
central subgroup of order $p^{\lambda(P_\phi)}$ such that $P_\phi/N$ admits a 
central decomposition. 
By Observation~\ref{obs:def} (2), we can assume that $N\leq [P_\phi, P_\phi]$. 
Let $X$ be the subspace of $\F_p^m$ corresponding to $N$.
Let $J/N, K/N\leq P_\phi/N$ be a central decomposition of $P_\phi/N$ for $J, K\leq 
P_\phi$. Applying
Observation~\ref{obs:def} (1) to $P_\phi/N$, we can assume that $J/N$ and $K/N$ 
both properly contain $[P_\phi/N, P_\phi/N]=[P_\phi, P_\phi]/N$. In particular, 
$J$ and $K$ properly contain $[P_\phi, P_\phi]$, 
so
$J/[P_\phi, P_\phi]$ (resp. 
$K/[P_\phi, P_\phi]$) corresponds to a non-trivial 
proper subspace $U_J\leq \F_p^n$ (resp. $U_K\leq \F_p^n$).
Then it can be verified that $U_J$ and $U_K$ span $\F_p^n$, 
and 
for any $u\in U_J$ 
and $u'\in U_K$, 
we have $\phi(u,u')\in X$. 
Therefore $U_J$ 
and $U_K$ form an orthogonal decomposition for $\phi/_{X}$.
\end{proof}

\begin{remark}[On the regular condition]\label{rem:regularity}
The reason for imposing the regular condition is to rule out the following central 
decompositions, which is not well-behaved regarding the correspondence between 
$\phi$ and $P_\phi$. Suppose that $S\leq P_\phi$ satisfies 
$[S,S]<[P_\phi,P_\phi]$. Then $S$ and $[P_\phi,P_\phi]$ form a central 
decomposition of $S[P_\phi,P_\phi]$. Translating back to $\phi$, $[S,S]<[P_\phi, 
P_\phi]$ just says that $\phi(U_S, U_S)$ is a proper subspace of $\F^m$, which is 
not related to whether $\phi|_{U_S}$ admits an orthogonal decomposition. 
\end{remark}

\subsection{Proof of Proposition~\ref{prop:relation}}

We shall work in the setting of alternating matrix spaces. So we state the 
correspondence of Definition~\ref{def:gp_degree} in 
this setting, which was proposed in \cite{Qia17} and has been used in 
\cite{BCG+19}.

\begin{definition}[Degrees and $\delta$ for alternating matrix spaces]
\label{def:alt_degree}
Let $\cA\leq \Lambda(n, \F)$. For $v\in \F^n$, the \emph{degree} of $v$ in $\cA$ 
is the dimension of $\cA v:=\{Av:A\in\cA\}$. The \emph{minimum degree} of $\cA$, 
denoted as $\delta(\cA)$, is the minimum degree over all $0\neq v\in\F^n$. 
\end{definition}

To translate from groups in $\fB_{p,2}(n, m)$ to alternating matrix spaces, we 
recall the following procedure which consists of inverses of the last two steps of 
the Baer-Lov\'asz-Tutte procedure. 

For any $P\in\fB_{p,2}(n, m)$, let $V=P/[P,P]\cong \Z_p^n$ and 
$U=[P,P]\cong\Z_p^m$.
The commutator map $\phi_P:V\times V\to U$ is alternating 
and bilinear. After fixing bases of $V$ and $U$ as $\F_p$-vector spaces, we can 
represent $\phi_P:\F_p^n\times\F_p^n\to\F_p^m$ as $(A_1, \dots, A_m)\in\Lambda(n, 
\F_p)^m$, 
which spans an $m$-dimensional 
$\cA_P\leq \Lambda(n, \F_p)$. 
It is easy to 
check that isomorphic groups yield isometric alternating matrix spaces. 
Furthermore, this procedure preserves $\kappa$ and $\lambda$, by essentially the 
same proof for 
Proposition~\ref{prop:space_map} and~\ref{prop:map_group}, and $\delta$, by a 
straightforward calculation. 
%

The following proposition then implies Proposition~\ref{prop:relation} (1).
\begin{proposition}
Given $\cA\leq\Lambda(n,\F)$, we have $\kappa(\cA)\leq\delta(\cA)$ and $\lambda(\cA)\leq\delta(\cA)$. 
\end{proposition}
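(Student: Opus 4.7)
The plan is to use a single vector achieving the minimum degree as a witness for both inequalities, exploiting that alternating-ness places the vector into its own ``perpendicular neighborhood.'' Let $v \in \F^n \setminus \{0\}$ be any nonzero vector with $\dim(\cA v) = \delta(\cA) =: d$.

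For the bound $\kappa(\cA) \leq \delta(\cA)$, the idea is to take $W := (\cA v)^{\perp}$, the ordinary orthogonal complement of $\cA v$ in $\F^n$ under the standard bilinear form, which has dimension $n - d$. The key observations are: (i) $v \in W$, since $v^t A v = 0$ for every alternating $A$; and (ii) for any $u \in W$ and any $A \in \cA$, one has $v^t A u = -u^t A v = -u^t(Av) = 0$ because $Av \in \cA v$ and $u \in (\cA v)^{\perp}$. Hence writing $W = \langle v \rangle \oplus V$ for any complement $V$ of $\langle v \rangle$ in $W$ yields an orthogonal decomposition of $\cA|_{W}$. When $d \leq n-2$, the complement $V$ is non-trivial and we are done; when $d = n - 1$, we have $\dim(W) = 1$, and $\cA|_{W}$ is the zero subspace of $\Lambda(1, \F)$, which admits an orthogonal decomposition by the convention recorded just before Definition~\ref{def:cA}. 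In either case, $\kappa(\cA) \leq n - \dim(W) = d = \delta(\cA)$.

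For the bound $\lambda(\cA) \leq \delta(\cA)$, the idea is to discard exactly the $d$ dimensions of $\cA$ responsible for the action on $v$. Set $\cA' := \{A \in \cA : Av = 0\}$. Applying rank--nullity to the linear map $\cA \to \F^n$, $A \mapsto Av$, whose image is $\cA v$ of dimension $d$, gives $\dim(\cA') = m - d$. For every $A \in \cA'$, alternating-ness yields $v^t A u = -u^t A v = 0$ for all $u \in \F^n$, so $v$ lies in the total radical of $\cA'$. Consequently $\F^n = \langle v \rangle \oplus V$, for any complement $V$ of $\langle v \rangle$ in $\F^n$, is an orthogonal decomposition of $\cA'$ (non-trivial once $n \geq 2$; the case $n=1$ forces $\cA = \{0\}$ and is immediate). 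Therefore $\lambda(\cA) \leq m - \dim(\cA') = d = \delta(\cA)$.

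There is no serious obstacle here: both parts reduce to the same structural fact, namely that a minimum-degree vector $v$ is automatically ``centralised'' by the perpendicular complement of $\cA v$ (for the $\kappa$-bound) and by the annihilator $\{A \in \cA : Av = 0\}$ (for the $\lambda$-bound). The only mild subtlety is the degenerate case $d = n - 1$ in the $\kappa$-argument, handled by the zero-space convention.
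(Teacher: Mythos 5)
Your proof is correct and follows essentially the same route as the paper's: the same minimum-degree vector $v$, the same subspace $(\cA v)^{\perp}=\{u: \forall A\in\cA,\ u^tAv=0\}$ split as $\langle v\rangle\oplus V$ for the $\kappa$-bound, and the same codimension count for the $\lambda$-bound. The only cosmetic difference is that for $\lambda$ you exhibit $\cA'=\{A\in\cA: Av=0\}$ directly from the definition, whereas the paper routes through its cut reformulation (Equation~\ref{eq:alt_def}); the underlying construction is identical.
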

\begin{proof}
We first show that $\kappa(\cA)\leq \delta(\cA)$. Take 
some $v\in \F^n$ such that $\deg(v)=\delta(\cA)$. If 
$\delta(\cA)=n-1$, then the inequality holds trivially. 
Otherwise, let 
$U=\{u\in\F^n: \forall\ A\in\cA, u^tAv=0\}$. Note that $\dim(U)=n-\deg(v)\geq 2$, 
and $v\in U$. Let $V$ be 
any complement space of $\langle v\rangle$ in $U$. Then $\langle v\rangle \oplus 
V$ is an orthogonal decomposition of $\cA|_U$. It follows that $\kappa(\cA)\leq 
n-\dim(U)=\deg(v)=\delta(\cA)$.
%

We then show that $\lambda(\cA)\leq \delta(\cA)$. Take 
some $v\in\F^n$ such that $\deg(v)=\delta(\cA)$. Let $W$ be any complement 
subspace 
of $\langle v\rangle$ in $\F^n$, and let $T_W$ be an $n\times (n-1)$ matrix whose 
columns form a basis of $W$. The space $v^t\cA T_W=\{v^tAT_W : A\in 
\cA\}\leq \M(1\times (n-1), \F)$ 
is of dimension $\deg(v)$. By Equation~\ref{eq:alt_def}, we then have 
$\lambda(\cA)\leq \dim(v^t\cA T_W)=\deg(v)=\delta(\cA)$. 
%
\end{proof}

In contrast to the graph setting, we show that it is possible that 
$\kappa(\cA)>\lambda(\cA)$ over $\Q$ and $\F_q$, therefore proving 
Proposition~\ref{prop:relation} (2). For this we need the following definition.
\begin{definition}\label{def:fully_connect}
We say that $\cA\leq \Lambda(n, \F)$ is \emph{fully connected}, if for any 
linearly 
independent $u, v\in \F^n$, there exists $A\in \cA$, such that $u^tAv\neq 0$. 
\end{definition}

An observation on fully connected $\cA$ follows from the definition easily.

\begin{observation}\label{obs:full}
Suppose that $\cA\leq \Lambda(n, \F)$ is fully connected. Then $\kappa(\cA)=n-1$.
\end{observation}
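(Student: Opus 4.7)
The plan is to establish the equality in two parts, upper and lower bounds.

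For the upper bound $\kappa(\cA) \leq n-1$, I would take any one-dimensional subspace $W = \langle w \rangle \leq \F^n$. Using $T = w$ as the $n \times 1$ matrix whose column spans $W$, we have $\cA|_W = \{w^t A w : A \in \cA\} = 0$ since every matrix in $\cA$ is alternating. By the convention stated earlier in the excerpt, the zero alternating matrix space admits an orthogonal decomposition, so $\kappa(\cA) \leq n - \dim(W) = n-1$.

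For the lower bound $\kappa(\cA) \geq n-1$, I would argue by contradiction: suppose there is a subspace $W \leq \F^n$ of dimension $d \geq 2$ such that $\cA|_W$ admits an orthogonal decomposition. Fixing an $n \times d$ matrix $T$ whose columns are a basis of $W$, this decomposition is a non-trivial direct sum $\F^d = U \oplus V$ with $u^t (T^t A T) v = 0$ for all $u \in U$, $v \in V$, $A \in \cA$. Pushing this forward through $T$, I set $U' = T U$ and $V' = T V$, which are non-trivial subspaces of $W$ with $W = U' \oplus V'$, satisfying $(u')^t A (v') = 0$ for all $u' \in U'$, $v' \in V'$, $A \in \cA$.

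The key point is then that I can select nonzero vectors $u' \in U'$ and $v' \in V'$, and since $U' \cap V' = 0$, these two vectors are linearly independent (any linear relation would put a nonzero vector into $U' \cap V'$). But then $u'$ and $v'$ witness a pair of linearly independent vectors killed by the whole space $\cA$, contradicting the hypothesis that $\cA$ is fully connected in the sense of Definition~\ref{def:fully_connect}. Hence no such $W$ of dimension at least $2$ exists, which gives $\kappa(\cA) \geq n-1$, and combined with the upper bound yields $\kappa(\cA) = n-1$.

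I do not foresee a real obstacle here; the only point requiring a little care is the translation between the orthogonal decomposition of $\cA|_W$ (formally a decomposition of the ambient $\F^d$) and the resulting decomposition of $W$ itself inside $\F^n$, which is handled routinely by transporting along $T$.
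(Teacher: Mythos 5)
Your proof is correct, and it is exactly the argument the paper has in mind (the paper leaves this observation unproved as an easy consequence of the definition): the upper bound $\kappa(\cA)\le n-1$ comes from the convention on the zero space applied to a one-dimensional restriction, and the lower bound from the fact that nonzero vectors $u'\in U'$, $v'\in V'$ of a non-trivial orthogonal decomposition of $\cA|_W$ with $\dim W\ge 2$ are linearly independent and annihilated by all of $\cA$, contradicting full connectivity. Your care in transporting the decomposition of $\F^d$ along $T$ to a decomposition of $W$ is exactly the right bookkeeping.
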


We shall construct a fully connected $\cA\leq \Lambda(n, \F)$ with 
$\lambda(\cA)<n-1=\kappa(\cA)$. To do this we need the fully connected notion in 
the (not 
necessarily alternating) matrix space setting. 
That is, $\cB\leq \M(s\times t, \F)$ is fully connected, if for any nonzero 
$u\in 
\F^s$ and nonzero $v\in \F^t$, 
there 
exists $B\in \cB$, such that $u^tBv\neq 0$. The following fact is well-known. 
\begin{fact}\label{fact:full}
Let $\F$ be a finite field or $\Q$. Then over $\F$, there exists a fully 
connected matrix space in $\M(s, \F)$ of dimension $s$.
\end{fact}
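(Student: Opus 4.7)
The plan is to construct $\cB$ explicitly via the left regular representation of a degree-$s$ field extension, which is a well-known trick in this area. Both hypotheses on $\F$ guarantee such an extension exists: over a finite field $\F_q$ we have $\F_{q^s}/\F_q$, and over $\Q$ we may take $\Q[x]/(x^s-2)$ (irreducible by Eisenstein at $2$). Call the extension $\F'$, and fix an $\F$-basis of $\F'$ to identify $\F'\cong\F^s$ as $\F$-vector spaces.

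For each $\alpha\in\F'$, let $M_\alpha\in\M(s,\F)$ be the matrix of the $\F$-linear map ``multiplication by $\alpha$'' on $\F'$ with respect to the chosen basis. Set $\cB=\{M_\alpha:\alpha\in\F'\}$. Since $\alpha\mapsto M_\alpha$ is $\F$-linear and injective (it is the left regular representation of $\F'$), $\cB$ is an $\F$-subspace of $\M(s,\F)$ of dimension $s$.

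It remains to verify full connectedness. Fix nonzero $u,v\in\F^s$ and view $v$ as a nonzero element of $\F'$. Since $\F'$ is a field, the map $\alpha\mapsto \alpha v$ is an $\F$-linear bijection from $\F'$ to itself, so the orbit $\cB v=\{M_\alpha v:\alpha\in\F'\}$ is all of $\F^s$. Because $u\neq 0$, there is some $w\in\F^s$ with $u^t w\neq 0$; writing $w=M_\alpha v$ yields the desired $B=M_\alpha\in\cB$ with $u^t B v\neq 0$.

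There is no real obstacle here; the only thing that needs care is the existence of a degree-$s$ extension, which is immediate in both cases. Everything else reduces to the fact that multiplication by a nonzero element of a field is a bijection, which is precisely what makes the regular representation yield a fully connected matrix space of the smallest possible dimension $s$.
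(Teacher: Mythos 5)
Your proof is correct and rests on the same key idea as the paper's: take a degree-$s$ field extension of $\F$ (which exists for finite fields and for $\Q$) and use its regular representation, so that every nonzero matrix in the resulting $s$-dimensional space is invertible. The one difference is that you verify full connectedness directly for the regular representation $\{M_\alpha\}$ itself, via the observation that $\{M_\alpha v:\alpha\}=\F^s$ for $v\neq 0$, whereas the paper first applies a column-shuffling transformation to the representation matrices and checks the property for that derived space; your route is a mild simplification and equally valid, since both verifications reduce to the invertibility of multiplication by a nonzero field element.
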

\begin{proof}
Let $\mathbb{K}$ be a degree-$s$ field extension of $\F$. The regular 
representation of $\mathbb{K}$ on $\F^s$ gives an $s$-dimensional $\cC\leq \M(s, 
\F)$, such that each nonzero $C\in \cC$ is of full rank. Let $(C_1, \dots, C_s)$ 
be an ordered basis of $\cB$. Let $B_i\in\M(s, \F)$, $i\in [s]$, be defined by 
$B_i=\begin{bmatrix}
C_1{e_i} & C_2{e_i} & \dots & C_s{e_i}
\end{bmatrix}$. That is, the $j$th column of $B_i$ is the $i$th column of $C_j$. 
Then $\cB=\langle B_1, \dots, B_s\rangle\leq\M(s, \F)$ is of dimension $s$ and 
fully connected. Indeed, if $\cB$ is not fully connected, then there exist 
nonzero $v\in \F^s$ and nonzero $u=\begin{bmatrix}
u_1 \\ u_2 \\ \vdots \\ u_s
\end{bmatrix}\in \F^s$ such that $v^tB_iu=0$ for any $i\in [s]$. But this just 
means that $v$ is in the left kernel of $C'=u_1C_1+\dots+u_sC_s$, contradicting 
that $C'$ is of full rank.
\end{proof}


Let $s, t\in \N$ and $n=s+t$. Let $\cB\leq \M(s\times t, \F)$ be a fully connected 
matrix space of dimension 
$d<n-1$. We shall use $\cB$ to 
construct a fully connected 
$\cA\leq \Lambda(n, \F)$ such that $\lambda(\cA)\leq d<n-1= \kappa(\cA)$. 

Suppose $\cB$ is spanned by $B_1, \dots, B_d\in \M(s\times t, \F)$. Let 
$A_i=\begin{bmatrix} 
0 & B_i \\
-B_i^t & 0 \end{bmatrix}$ for $i\in[d]$. For $1\leq i<j\leq s$, let 
$C_{i,j}=\begin{bmatrix}
E_{i,j} & 0 \\
0 & 0
\end{bmatrix}\in \Lambda(n, \F)$, where 
$E_{i,j}={e_i}e_j^t-{e_j}e_i^t\in\Lambda(s,\F)$ is an elementary alternating matrix. 
For $1\leq i<j\leq t$, let 
$D_{i,j}=\begin{bmatrix} 0 & 0 \\ 0 & F_{i,j}\end{bmatrix}\in\Lambda(n, \F)$, 
where $F_{i,j}={e_i}e_j^t-{e_j}e_i^t\in\Lambda(t,\F)$ is an 
elementary alternating 
matrix. Let $\cA$ be spanned by $\{A_i:i\in[d]\}\cup \{C_{i,j}:1\leq i<j\leq 
s\}\cup \{D_{i,j}:1\leq i<j\leq t\}$. 
\begin{proposition}
Let $\cA\leq \Lambda(n, \F)$ be as above. Then $\cA$ is fully connected. 
\end{proposition}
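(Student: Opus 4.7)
The plan is to verify the defining inequality directly, by a case analysis on the block structure of the input vectors. Given linearly independent $u,v\in\F^n$, I would write $u=\begin{bmatrix}u_1\\ u_2\end{bmatrix}$ and $v=\begin{bmatrix}v_1\\ v_2\end{bmatrix}$ with $u_1,v_1\in\F^s$ and $u_2,v_2\in\F^t$. A direct block computation using the generators of $\cA$ yields
\begin{align*}
u^t A_i v &= u_1^t B_i v_2 - v_1^t B_i u_2,\\
u^t C_{i,j} v &= u_1^t E_{i,j} v_1,\\
u^t D_{i,j} v &= u_2^t F_{i,j} v_2.
\end{align*}

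First I would handle the easy case where $u_1,v_1$ are linearly independent in $\F^s$ (which in particular forces $s\geq 2$). The $C_{i,j}$'s together span a copy of $\Lambda(s,\F)$ sitting in the upper-left block, and $\Lambda(s,\F)$ is itself fully connected for $s\geq 2$: extend $\{u_1,v_1\}$ to a basis of $\F^s$ and pull back $E_{1,2}$. Hence some linear combination of the $C_{i,j}$'s separates $u$ and $v$. The symmetric case where $u_2,v_2$ are linearly independent in $\F^t$ is handled identically using the $D_{i,j}$'s.

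The main case is when both $\{u_1,v_1\}$ and $\{u_2,v_2\}$ are linearly dependent; here I would use the $A_i$'s together with full connectedness of $\cB$. The representative subcase is when all four of $u_1,u_2,v_1,v_2$ are nonzero, so that $v_1=\alpha u_1$ and $v_2=\beta u_2$ for scalars $\alpha,\beta\in\F$; linear independence of $u$ and $v$ then forces $\alpha\neq\beta$, and the formula above collapses to $u^t A_i v=(\beta-\alpha)\,u_1^t B_i u_2$. Full connectedness of $\cB$ applied to the nonzero pair $(u_1,u_2)\in\F^s\times\F^t$ then produces some $B_i$ with $u_1^t B_i u_2\neq 0$, so some $A_i$ separates $u$ and $v$.

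What remains is the degenerate bookkeeping when at least one of $u_1,u_2,v_1,v_2$ vanishes. If exactly one vanishes, $u^t A_i v$ collapses to a single term of the form $\pm\, p^t B_i q$ with $p\in\F^s\setminus\{0\}$ and $q\in\F^t\setminus\{0\}$ extracted from the surviving coordinates, and full connectedness of $\cB$ again finishes the argument. If two vanish in the same block — say $u_1=v_1=0$ — then linear independence of $u,v$ forces $u_2,v_2$ to be linearly independent, returning us to the symmetric easy case already handled. I expect the only real obstacle is organising these degenerate subcases cleanly without overlap; the underlying computations are routine linear algebra.
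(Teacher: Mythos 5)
Your proof is correct and follows essentially the same route as the paper's: the $C_{i,j}$ and $D_{i,j}$ generators force $\{u_1,v_1\}$ and $\{u_2,v_2\}$ to be linearly dependent, after which the $A_i$'s together with full connectedness of $\cB$ yield the contradiction. The only subcase your bookkeeping omits is two coordinates vanishing in different blocks \emph{and} different vectors (e.g.\ $u_1=v_2=0$ with $v_1,u_2\neq 0$), but that is disposed of by the same single-term computation $u^tA_iv=-v_1^tB_iu_2$ as your ``exactly one vanishes'' case, so nothing is genuinely missing.
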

\begin{proof}
Assume there exist linearly independent $u,v\in\F^n$ such that for any $A\in\cA$, $u^tAv=0$. 
Take $u=\begin{bmatrix}u_1\\u_2\end{bmatrix}$ and $v=\begin{bmatrix}v_1\\v_2\end{bmatrix}$, where $u_1,v_1\in\F^s$ and $u_2,v_2\in\F^t$. Note that for any $1\leq i<j\leq s$, 
$$
\begin{bmatrix}
u_1^t&u_2^t
\end{bmatrix}
\begin{bmatrix}
E_{i,j} & 0 \\
0 & 0
\end{bmatrix}
\begin{bmatrix}
v_1\\v_2
\end{bmatrix}
=u_1^tE_{i,j}v_1=0.$$
Similarly, we have $u_2^tF_{i,j}v_2=0$ for all $1\leq i<j\leq t$. 

We then 
distinguish among the following cases.
\begin{enumerate}
\item $v_1$ and $v_2$ are both nonzero. In this case we have $u_1=\lambda v_1$ 
and $u_2=\mu v_2$ for some 
$\lambda\neq\mu\in\F$. Therefore, we have
$$\begin{bmatrix}
u_1^t&u_2^t
\end{bmatrix}
\begin{bmatrix}
0 & B_i \\
-B_i^t & 0
\end{bmatrix}
\begin{bmatrix}
v_1\\v_2
\end{bmatrix}=-u_2^tB_i^tv_1+u_1^tB_iv_2=-\mu v_2^tB_i^tv_1+\lambda 
v_1^tB_iv_2=(\lambda-\mu)v_1^tB_iv_2.$$
Since $\cB$ is fully connected, this implies that $v_1=0$ or $v_2=0$, a 
contradiction to the assumption of this case. 
\item $v_1$ is zero and $v_2$ is nonzero. Then $u_2=\lambda v_2$, and 
$u_1$ cannot be zero. Therefore, we have
$$\begin{bmatrix}
u_1^t&u_2^t
\end{bmatrix}
\begin{bmatrix}
0 & B_i \\
-B_i^t & 0
\end{bmatrix}
\begin{bmatrix}
v_1\\v_2
\end{bmatrix}=-u_2^tB_i^tv_1+u_1^tB_iv_2=u_1^tB_iv_2=0, $$
which is a contradiction to the full connectivity of $\cB$. 
\item $v_1$ is nonzero and $v_2$ is zero. This case is in complete analogy with 
the previous case. 
\end{enumerate}
This concludes the proof that $\cA$ is fully connected. 
%
\end{proof} 
We then have 
$\kappa(\cA)=n-1$ by Observation~\ref{obs:full}. Now observe that the subspace 
of $\cA$ spanned by $C_{i,j}$ and $D_{i,j}$ admits a 
central decomposition. This gives that $\lambda(\cA)\leq 
d<n-1=\kappa(\cA)$. Over $\F_q$ and $\Q$, such $\cB$ exists for 
$s>1$ by Fact~\ref{fact:full}. This concludes the proof of 
Proposition~\ref{prop:relation} (2). 

\bibliographystyle{alpha}
\bibliography{references}
\end{document}